\newcommand{\forces}{\Vdash}
\newtheorem{theorem}[equation]{Theorem}
\newtheorem{assumption}[equation]{Assumption}
\newtheorem{lemma}[equation]{Lemma}
\newtheorem{facts}[equation]{Facts}
\theoremstyle{remark}
\newtheorem{remark}[equation]{Remark}
\theoremstyle{definition}
\newtheorem{definition}[equation]{Definition}
\numberwithin{equation}{section}
\newcommand{\textdef}[1]{\boldmath\textbf{#1}\unboldmath}
\DeclareMathOperator{\crit}{cr}
\DeclareMathOperator{\mySII}{\boxplus_2}
\DeclareMathOperator{\myXi}{\circledcirc_i}
\DeclareMathOperator{\myXI}{\circledcirc_1}
\DeclareMathOperator{\myXII}{\circledcirc_2}
\DeclareMathOperator{\myXIII}{\circledcirc_3}
\DeclareMathOperator{\cf}{cf}
\DeclareMathOperator{\supp}{supp}
\DeclareMathOperator{\Rel}{R}
\DeclareMathOperator{\Ri}{R_i}
\DeclareMathOperator{\RI}{R_1}
\DeclareMathOperator{\RII}{R_2}
\DeclareMathOperator{\Ra}{R_3}
\newcommand{\Pa}{P^4}
\newcommand{\Pai}{P^i}
\newcommand{\PaV}{P^5}
\newcommand{\PaVI}{P^6}
\newcommand{\PaVII}{P^7}
\DeclareMathOperator{\cov}{cov}
\DeclareMathOperator{\cof}{cof}
\DeclareMathOperator{\non}{non}
\DeclareMathOperator{\add}{add}
\newcommand{\covnull}{\cov(\mathcal N)}
\newcommand{\cofnull}{\cof(\mathcal N)}
\newcommand{\addnull}{\add(\mathcal N)}
\newcommand{\nonnull}{\non(\mathcal N)}
\newcommand{\covmeager}{\cov(\mathcal M)}
\newcommand{\cofmeager}{\cof(\mathcal M)}
\newcommand{\addmeager}{\add(\mathcal M)}
\newcommand{\nonmeager}{\non(\mathcal M)}
\newcommand{\BUP}[1]{#1^U}
\subjclass[2010]{03E17}
\keywords{Set theory of the reals, Cicho\'{n}'s Diagram, Large continuum, Large cardinals}
\date{\today}
\title{Compact Cardinals and Eight Values in Cicho\'{n}'s Diagram}
\author{Jakob Kellner}
\email{jakob.kellner@tuwien.ac.at}
\author{Anda Ramona T{\u{a}}nasie}
\email{anda-ramona.tanasie@tuwien.ac.at}
\author{Fabio Elio Tonti}
\email{fabio.tonti@tuwien.ac.at}
\address{Institute of Discrete Mathematics and Geometry\\
Technische Universität Wien (TU Wien).}
\begin{document}

\begin{abstract}
Assuming  three strongly compact cardinals, it is consistent that
\[
\aleph_1 < \addnull < \covnull < \mathfrak{b} < \mathfrak{d} < \nonnull < \cofnull < 2^{\aleph_0}.\]
Under the same assumption, it is consistent that
\[
\aleph_1 < \addnull < \covnull < \nonmeager < \covmeager < \nonnull < \cofnull < 2^{\aleph_0}.\]
\end{abstract}

\maketitle
\section*{Introduction}
We assume the reader is familiar with the definitions and some basic
properties (which can all be found, e.g., in~\cite{BJ}) of the cardinal 
characteristics in Cicho\'n's diagram:
\[
\xymatrix@=1.6ex{
           & \covnull\ar[r]        & \nonmeager \ar[r]      &  \cofmeager \ar[r]     & \cofnull\ar[r]  &2^{\aleph_0} \\
           &                    & \mathfrak b\ar[r]\ar[u]  &  \mathfrak d\ar[u] &              &\\ 
\aleph_1\ar[r] & \addnull\ar[r]\ar[uu] & \addmeager\ar[r]\ar[u] &  \covmeager\ar[r]\ar[u]& \nonnull\ar[uu] &
}
\]
An arrow between $\mathfrak x$ and $\mathfrak y$ indicates that ZFC proves
$\mathfrak x\le \mathfrak y$. 
Moreover, $\max(\mathfrak d,\nonmeager)=\cofmeager$ and $\min(\mathfrak b,\covmeager)=\addmeager$.
These are the only ``simple'' restrictions in the following sense: every assignment of $\aleph_1$ and $\aleph_2$ to the entries of Cicho\'{n}'s diagram that  honors these restrictions can be shown to be consistent.
It is more challenging to get 
more than two simultaneously different values, for recent progress in this direction
see, e.g., \cite{MR3047455,MR3513558,five}.

This paper consists of two parts: In the first one, 
we present a finite support ccc iteration $\Pa$ forcing that
$\aleph_1 < \addnull < \covnull < \mathfrak{b}<\mathfrak{d}=2^{\aleph_0}$
(and actually something stronger, cf.~Lemmas~\ref{lem:goodreward3} and~\ref{lem:Pcovnullislarge}). This is nothing new: The forcing and all required 
properties were presented in~\cite{MR3047455}.
We recall all the facts that are required for our result,
in a form convenient for our purposes.

In the second part, 
we investigate the (iterated) Boolean 
ultrapower $\PaVII$  of $\Pa$. Assuming three strongly compact cardinals, this ultrapower (again a finite support ccc iteration) forces
\[
\aleph_1 < \addnull < \covnull < \mathfrak{b} < \mathfrak{d} < \nonnull < \cofnull < 2^{\aleph_0},\]
i.e., we get the following values in the diagram (for some increasing cardinals $\lambda_i$):
\newcommand{\mye}{*+[F.]{\phantom{\lambda}}}
\[
\xymatrix@=2.5ex{
           & \lambda_2\ar[r]        & \mye \ar[r]      &  \mye \ar[r]     & \lambda_6\ar[r]  &\lambda_7 \\
           &                    & \lambda_3\ar[r]\ar[u]  &  \lambda_4\ar[u] &              &\\ 
\aleph_1\ar[r] & \lambda_1\ar[r]\ar[uu] & \mye\ar[r]\ar[u] &  \mye\ar[r]\ar[u]& \lambda_5\ar[uu] &
}
\]    
It seems unlikely that the large cardinals assumption is actually needed, but we would expect 
a proof without it to be considerably more complicated.

The kind of Boolean ultrapower that we use was 
investigated in~\cite{MR0300887}, 
and recently applied, e.g., in~\cite{MR3451934} and~\cite{F1611} (where a
Boolean ultrapower of a forcing notion is applied to cardinal characteristics of the reals).                                                                             Recently Shelah developed a method of using Boolean ultrapowers to control characteristics in Cicho\'n's diagram.
The current paper is a relatively simple application of these methods.
A more complicated one, in an upcoming paper~\cite{1122} by 
Goldstern, Shelah and the first author, shows that all entries in Cicho\'ns diagram can be pairwise different.

\subsection*{Acknowledgments}
We are grateful to Martin Goldstern and Saharon Shelah for valuable discussions.

We would like to thank an anonymous referee for their quick, insightful and kind review, and for pointing out a few embarrassing typos and mistakes.

Supported by the Austrian Science Fund (FWF): P26737 and P30666.
The second and third authors are both recipients of a DOC Fellowship of the Austrian Academy of Sciences at the Institute of Discrete Mathematics and Geometry, TU Wien.

\section{The initial forcing}\label{sec:partA}


\subsection{Good iterations}\label{sec:bla1}

The forcing $\Pa$ 
we are about to define has many pleasant properties 
because it is 
``good'', a notion
first explored 
in
~\cite{MR1071305} and
~\cite{MR1129144}.
We now recall the basic facts of good iterations, and specify the instances 
of the relations we use.

\begin{assumption}
We will consider binary relations $\Rel$ on $X=\omega^\omega$ (or on $X=2^\omega)$ that satisfy the following: There are relations $\Rel^n$ such that
$\Rel=\bigcup_{n\in\omega} \Rel^n$, each $\Rel^n$ is a closed subset (and in fact absolutely defined)
of $X\times X$, and for  $g\in X$
and $n\in\omega$, the set $\{f\in X:\, f\Rel^n g\}$ is nowhere dense. 
Also, for all $g\in X$ there is some $f\in X$ with $f \Rel g$.
\end{assumption} 

We will actually use another space as well,
the space $\mathcal C$ 
of strictly positive rational sequences $(q_n)_{n\in\omega}$ such that
$\sum_{n\in\omega}q_n\le 1$.
It is easy to see that $\mathcal C$ is homeomorphic to
$\omega^\omega$, when
we equip the rationals with the discrete topology 
and use the product topology.

We use the following instances of relations $\Rel$ 
on $X$; it is easy
to see that they all satisfy the assumption (in case of $X=\mathcal C$ we use the homeomorphism mentioned above):
\begin{definition}
\begin{itemize}
\item[1.] $X=\mathcal C$: $f \RI g$  if
$(\forall^*n\in\omega)\, f(n)\le g(n)$.
\\
(We use ``$\forall ^* n\in \omega$'' for ``$(\exists n_0\in\omega)\,(\forall n>n_0)$''.)
\item[2.] $X=2^\omega$: $f \RII g$ if $(\forall^* n\in\omega)\, f\restriction I_n\neq g\restriction I_n$, 
\\
where 
$(I_n)_{n\in\omega}$ is the increasing interval partition of $\omega$ with $|I_n|=2^{n+1}$.
\item[3.] $X= \omega^\omega$: $f \Ra g$  if $(\forall^* n\in\omega)\, f(n)\le g(n)$.
\end{itemize}
\end{definition}

We say ``$f$ is bounded by $g$'' if $f\Rel g$; and, for $\mathcal Y\subseteq \omega^\omega$, 
``$f$ is bounded by $\mathcal Y$'' if
$(\exists y\in \mathcal Y)\, f \Rel y$. We say ``unbounded'' for ``not bounded''. (I.e., $f$ is unbounded by $\mathcal Y$ if $(\forall y\in \mathcal Y)\,\lnot f\Rel y$.) 
We call $\mathcal X$  an $\Rel$-unbounded family, if $\lnot (\exists g)\,(\forall x\in \mathcal X)\,x\Rel g$, and an $\Rel$-dominating family if $(\forall f)\,(\exists x\in \mathcal X)\, f\Rel x$. Let $\mathfrak{b}_i$ be the minimal size of an $\Ri$-unbounded family, 
and $\mathfrak{d}_i$ of an $\Ri$-dominating family.

We only need the following connection between $\Ri$ and the cardinal characteristics: 
\begin{lemma}\label{lem:connection}
\begin{itemize}
\item[1.]
$\addnull=\mathfrak{b}_1$ and $\cofnull=\mathfrak{d}_1$.
\item[2.]
$\covnull\le\mathfrak{b}_2$ and $\nonnull\ge\mathfrak{d}_2$.
\item[3.]
$\mathfrak{b}=\mathfrak{b}_3$ and $\mathfrak{d}=\mathfrak{d}_3$.
\end{itemize}
\end{lemma}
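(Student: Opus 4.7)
The plan is to handle the three items independently, as each corresponds to a different classical characterization.

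Item (3) is immediate, since $\Ra$ is literally the eventual-domination ordering on $\omega^\omega$, so $\mathfrak{b}_3$ and $\mathfrak{d}_3$ are just the usual definitions of $\mathfrak{b}$ and $\mathfrak{d}$.

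Item (1) is Bartoszy\'nski's classical combinatorial characterization of $\addnull$ and $\cofnull$ via summable sequences (see e.g.\ \cite{BJ}, Theorem 2.3.9 and surrounding material). Concretely, one shows that the measure-theoretic characteristics $\addnull$ and $\cofnull$ agree with the additivity/cofinality of the $\sigma$-ideal generated by $\mathcal C$-controlled null sets $N_f=\{x\in 2^\omega:\,(\exists^\infty n)\,x\in A_n^f\}$, where $f=(q_n)\in\mathcal C$ and $A_n^f$ is chosen of measure $\le q_n$ in a canonical way; summability of $f$ ensures $N_f$ is null via Borel--Cantelli, and the ordering $f\le^* g$ translates into inclusion $N_f\subseteq^* N_g$ of cofinal subfamilies. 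Rather than reproduce this, I would simply cite it; the only thing to check is that the relation $\RI$ defined on $\mathcal C$ in the paper is literally eventual domination of summable sequences, which matches Bartoszy\'nski's setup.

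Item (2) is a short direct measure computation. The key observation is that for any $f\in 2^\omega$, the set
\[
M_f=\{g\in 2^\omega:\,\lnot\,f\RII g\}=\{g:\,(\exists^\infty n)\,g\restriction I_n=f\restriction I_n\}
\]
is null, because the event $g\restriction I_n=f\restriction I_n$ has probability $2^{-|I_n|}=2^{-2^{n+1}}$ and these probabilities are summable, so Borel--Cantelli gives $\mu(M_f)=0$. By the symmetric roles of $f$ and $g$ in the definition of $\RII$, the set $\{g:\,f\RII g\}$ is conull for every $f$. From this both inequalities are short:
\begin{itemize}
\item[($\covnull\le \mathfrak b_2$):] Given $\mathcal X\subseteq 2^\omega$ of size $<\covnull$, the family $\{M_f:\,f\in\mathcal X\}$ is a family of fewer than $\covnull$-many null sets, so it fails to cover $2^\omega$; any $g$ outside the union witnesses $f\RII g$ for every $f\in\mathcal X$, so $\mathcal X$ is $\RII$-bounded.
\item[($\nonnull\ge\mathfrak d_2$):] Given a non-null $\mathcal Y\subseteq 2^\omega$ and any $f\in 2^\omega$, the conull set $\{y:\,f\RII y\}$ meets $\mathcal Y$, so $\mathcal Y$ is $\RII$-dominating and thus $|\mathcal Y|\ge \mathfrak d_2$.
\end{itemize}

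The only genuine obstacle is item (1), whose self-contained proof is substantial; I would rely on the cited characterization. Items (2) and (3) are then essentially one-line arguments.
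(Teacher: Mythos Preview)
Your proof is correct and essentially matches the paper's own: item~(3) is by definition, item~(1) is cited from \cite{BJ} (the paper points to 6.5.B there rather than 2.3.9), and item~(2) is the same null-set argument, just phrased contrapositively for $\covnull\le\mathfrak{b}_2$. The only extra content you provide is the explicit Borel--Cantelli justification that $M_f$ is null, which the paper leaves implicit.
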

\begin{proof}
(3) holds by definition. (1) can be found in~\cite[6.5.B]{BJ}.
To prove (2), note that 
for fixed $g\in 2^\omega$ the set $\{f\in 2^\omega: \lnot g\RII f\} $
is a null set, call it $N_g$. 
%
Let $\mathcal G$ be an $\RII$-unbounded family.
Then $\{N_g:\, g\in\mathcal{G}\}$ covers $2^\omega$:
Fix $f\in 2^\omega$.
As $f$ does not bound $\mathcal{G}$, there is some 
$g\in \mathcal{G}$ unbounded by $f$, i.e., $f\in N_g$.
Let $X$ be a non-null set. 
Then $X$ is $\RII$-dominating: For any $g\in 2^\omega$
there is some $x\in X\setminus N_g$, i.e., $g \RII x$.
%
%
\end{proof}

%

\begin{definition}\cite{MR1071305}\label{def:good}
Let $P$ be a ccc forcing, $\lambda$ an uncountable regular cardinal, and $\Rel$ as above.
$P$ is \textdef{\math(\Rel,\lambda)$-good},
if for each $P$-name $r\in\omega^\omega$ there is (in $V$) a nonempty
set $\mathcal Y\subseteq \omega^\omega$ of size ${<}\lambda$
such that every $f$ (in $V$) that is $\Rel$-unbounded by $\mathcal Y$ is forced to be $\Rel$-unbounded by $r$ as well.
\end{definition}

Note that $\lambda$-good trivially implies $\mu$-good if $\mu\ge\lambda$
are regular.

How do we get good forcings? Let us just quote the following results:
\begin{lemma}\label{lem:gettinggood}
A FS iteration of Cohen forcing is good for any $(\Rel,\lambda)$, and
the composition of two $(\Rel,\lambda)$-good forcings is $(\Rel,\lambda)$-good.
\\
Assume that $(P_\alpha,Q_\alpha)_{\alpha<\delta}$ is a FS ccc iteration.
Then $P_\delta$ is $(\Rel,\lambda)$-good, if each $Q_\alpha$ is forced to satisfy the following:
\begin{itemize}
\item[1.] For $\Rel=\RI$:  $|Q_\alpha|<\lambda$, or $Q_\alpha$ is $\sigma$-centered, or  $Q_\alpha$ is a sub-Boolean-algebra of the random algebra.
\item[2.] For $\Rel=\RII$: $|Q_\alpha|<\lambda$, or $Q_\alpha$ is $\sigma$-centered.
\item[3.] For $\Rel=\Ra$: $|Q_\alpha|<\lambda$.
\end{itemize}
\end{lemma}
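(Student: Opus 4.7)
\emph{Plan.} The lemma bundles several assertions: the three ``building-block'' facts (that a ccc forcing is good if it is small, or $\sigma$-centered, or a sub-algebra of random, under the appropriate $\Rel$) together with the two preservation statements (composition and FS ccc iteration). I would handle the building blocks first and then treat preservation by induction on the length of the iteration.

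For the building blocks, the easiest case is a ccc $Q$ with $|Q|<\lambda$, which subsumes Cohen forcing since $|\mathbb{C}|=\aleph_0<\lambda$. Given a $Q$-name $\dot r$ for a real, for each $q\in Q$ the ccc lets us choose, for every $n$, a countable maximal antichain below $q$ deciding $\dot r\restriction n$ (or $\dot r\restriction I_n$, depending on $\Rel$); collecting the decisions yields at most countably many ``envelope reals'' $y_{q,k}\in X$ in the ground model such that whatever scenario $q$ might force for $\dot r$ is $\Rel$-bounded by one of the $y_{q,k}$. The union $\mathcal Y=\{y_{q,k}:q\in Q,\, k<\omega\}$ has size ${<}\lambda$, and if $f$ is $\Rel$-unbounded by every $y_{q,k}$, then no $q$ can force $f\Rel\dot r$, as required. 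For $\sigma$-centered $Q$ and $\Rel\in\{\RI,\RII\}$, decompose $Q=\bigcup_n C_n$ with each $C_n$ centered: any two conditions in the same $C_n$ that decide $\dot r(k)$ (for $\RI$) or $\dot r\restriction I_k$ (for $\RII$) must agree on that decision, since they share a common extension. This gives a single ``piecewise-defined'' envelope $y_n$ per piece, so the countable $\mathcal Y=\{y_n:n<\omega\}$ witnesses goodness. For the random algebra and $\RI$, one uses the measure-theoretic structure: for a random-name $\dot r\in\mathcal C$, take $y(n)$ to be (essentially) the supremum of the values of $\dot r(n)$ on conditions of measure ${\ge}1/2$, and invoke Fubini to check that $y$ is summable, so a single $y$ suffices.

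For composition: given $P*\dot Q$ and a $P*\dot Q$-name $\dot r$, view $\dot r$ as a $P$-name for a $\dot Q$-name, apply $(\Rel,\lambda)$-goodness of $\dot Q$ inside $V^P$ to get a $P$-name $\dot{\mathcal Y}{}'$ of size ${<}\lambda$, and then apply $(\Rel,\lambda)$-goodness of $P$ to an enumeration of $\dot{\mathcal Y}{}'$ (or, more precisely, to each coordinate in a suitable name for the enumeration) to obtain a ground-model $\mathcal Y$ of size ${<}\lambda$. For FS ccc iteration of length $\delta$, induct on $\delta$: the successor step reduces to composition, and the limit step splits by cofinality. If $\cf(\delta)>\omega$, any $P_\delta$-name $\dot r$ for a real factors through some $P_\alpha$ with $\alpha<\delta$ by ccc and finite-support, and the inductive hypothesis applies at $\alpha$. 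If $\cf(\delta)=\omega$, fix a cofinal sequence $\alpha_n\nearrow\delta$, decompose $\dot r$ along a countable mesh of antichains into pieces which are $P_{\alpha_n}$-names, apply the inductive hypothesis at each $\alpha_n$ to get $\mathcal Y_n$ of size ${<}\lambda$, and take $\mathcal Y=\bigcup_n \mathcal Y_n$, still of size ${<}\lambda$ by regularity of $\lambda$.

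The main obstacle is the countable-cofinality limit step, where one must verify that unboundedness of $f\in V$ by $\bigcup_n\mathcal Y_n$ propagates to unboundedness by the full name $\dot r$. This uses the structure assumed on $\Rel$ in a non-trivial way: since $\Rel=\bigcup_n\Rel^n$ with each $\Rel^n$ closed, the statement ``$q\forces f\Rel\dot r$'' means that below $q$, for some $n_0$, $f\,\Rel^{n}\,\dot r$ holds for every $n\ge n_0$; for each such $n$, the relevant coordinate of $\dot r$ is decided at some bounded stage $\alpha_m$, so the inductive hypothesis at $\alpha_m$ produces a witness $y\in\mathcal Y_m$ with $f\,\Rel\,y$, contradicting unboundedness of $f$ by $\mathcal Y$. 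Making this bookkeeping precise for all three $\Ri$ simultaneously --- and checking that the witnesses produced at the building-block stage genuinely interact correctly with the finite-support amalgamation --- is where the bulk of the work lies.
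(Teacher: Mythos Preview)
The paper's own ``proof'' of this lemma is entirely by citation: preservation under FS ccc iterations is quoted from Judah--Shelah (cf.\ \cite[6.4.11--12]{BJ}); goodness of small ccc forcings from \cite[6.4.7]{BJ}; the $\sigma$-centered and random cases for $\RI$ from Judah--Shelah and Kamburelis (cf.\ \cite[6.5.17--18]{BJ}); and the $\sigma$-centered case for $\RII$ from Brendle~\cite{MR1129144}. You, by contrast, outline how the actual arguments go. Your overall strategy matches those references: small forcings via per-condition interpretations, $\sigma$-centered via per-piece envelopes, random via a measure/Fubini computation, and iteration preservation by induction with the countable-cofinality limit as the delicate step.

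That said, two of your building-block sketches are oversimplified to the point of being gaps. For $\sigma$-centered and $\RI$: you correctly note that compatible conditions deciding $\dot r(k)$ must agree, but many coordinates $k$ will not be decided by \emph{any} condition in a given piece $C_n$, so your ``single envelope $y_n$'' is only partially defined; you have not explained how to complete it to an element of $\mathcal C$ and why unboundedness by the completed $y_n$ still blocks $q\Vdash f\,\RI\,\dot r$ for $q\in C_n$. (It can be done --- the finite-subset trick shows the defined coordinates sum to at most $1$ --- but the verification that this suffices is exactly the content of the cited result.) For $\sigma$-centered and $\RII$ the problem is worse: your envelope $y_n$ may agree with $f$ on only the finitely many blocks $I_k$ that happen to be decided in $C_n$, which says nothing about $\lnot(f\,\RII\,y_n)$; Brendle's argument uses a more careful construction than a direct per-piece envelope. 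Similarly, your random/$\RI$ sketch (``supremum over conditions of measure $\ge 1/2$'') is in the spirit of Kamburelis, but the actual definition and the summability check need more than a one-line appeal to Fubini. Your treatment of the iteration step, on the other hand, is essentially the Judah--Shelah proof, and your identification of the $\cf(\delta)=\omega$ case as the crux is accurate.
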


\begin{proof}
$(\Rel,\lambda)$-goodness is preserved by FS ccc iterations (in particular compositions),
as proved in~\cite{MR1071305}, cf.~\cite[6.4.11--12]{BJ}. 
Also, ccc forcings of size ${<}\lambda$ are $(\Rel,\lambda)$-good~\cite[6.4.7]{BJ},
which takes care of the case of Cohens and of $|Q_\alpha|<\lambda$.
So it remains to show that (for $i=1,2$) the ``large'' iterands in the list are
$(\Ri,\lambda)$-good. For $\RI$ 
this follows from~\cite{MR1071305} and~\cite{MR1022984}, cf.~\cite[6.5.17--18]{BJ}.
For $\RII$ this is proven in~\cite{MR1129144}. 
\end{proof}

\begin{lemma}\label{lem:bkb}
Let $\lambda\le\kappa\le\mu$ be uncountable regular cardinals.
After forcing with $\mu$ many Cohen reals $(c_\alpha)_{\alpha\in \mu}$, followed by 
an $(\Rel,\lambda)$-good forcing, we get: For every real $r$ in the final extension, the set 
$\{\beta\in \kappa:\, c_\beta\text{ is unbounded by }r\}$ is cobounded in $\kappa$. I.e., 
$(\exists \alpha\in\kappa)\, (\forall \beta\in \kappa\setminus \alpha)\, \lnot c_\beta\Rel r$.

\end{lemma}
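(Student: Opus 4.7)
The plan is to apply $(\Rel,\lambda)$-goodness of $Q$ over the intermediate model $V[G]$ (where $G$ is the $C_\mu$-generic), rather than goodness of the full composition $C_\mu*Q$ over $V$. This is essentially forced on us by the shape of the definition: goodness speaks only about candidate reals $f$ lying in the ground model relative to which the forcing is applied, and the Cohen reals $c_\beta$ are not in $V$ but they are in $V[G]$.

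Fix a name $\dot r$ for a real in the final extension $V[G][H]$. Working in $V[G]$ and viewing $\dot r$ as a $Q$-name, goodness of $Q$ yields $\mathcal Y\in V[G]$ of size ${<}\lambda$ such that every $f\in V[G]$ which is $\Rel$-unbounded by $\mathcal Y$ is $Q$-forced to be $\Rel$-unbounded by $\dot r$. Next, by ccc of $C_\mu$ and the usual nice-name argument, each element of $\mathcal Y$ has a $C_\mu$-name with countable support; collecting supports for an enumeration of $\mathcal Y$ of length ${<}\lambda$ in $V[G]$ and using that $\lambda$ is uncountable regular gives $A\subseteq\mu$ with $|A|<\lambda$ and $\mathcal Y\in V[G\restriction A]$. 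Since $|A|<\lambda\le\kappa$ and $\kappa$ is regular, $A\cap\kappa$ is bounded below $\kappa$; set $\alpha:=\sup(A\cap\kappa)+1<\kappa$.

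For any $\beta\in\kappa\setminus\alpha$ we have $\beta\notin A$, so $c_\beta$ is Cohen-generic over $V[G\restriction A]$. By the standing assumption on $\Rel$, for each $y\in\mathcal Y\subseteq V[G\restriction A]$ the set $\{f:\,f\Rel y\}=\bigcup_n\{f:\,f\Rel^n y\}$ is a meager Borel set coded in $V[G\restriction A]$, so the Cohen real $c_\beta$ avoids it; hence $\lnot c_\beta\Rel y$ for every $y\in\mathcal Y$, i.e., $c_\beta$ is $\Rel$-unbounded by $\mathcal Y$. Since $c_\beta\in V[G]$, goodness applies and forces $c_\beta$ to be $\Rel$-unbounded by $\dot r$, so $\lnot c_\beta\Rel r$ in $V[G][H]$. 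This gives the cobounded conclusion with witness $\alpha$.

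The main obstacle is purely conceptual: one must resist the temptation to apply goodness to $C_\mu*Q$ over $V$ (which would produce $\mathcal Y\in V$ but give no handle on $c_\beta\notin V$), and instead use that $Q$ is good over $V[G]$, where the Cohen reals are legitimate candidates for the role of $f$. The remaining bookkeeping, bounding the Cohen support of $\mathcal Y$ and invoking that Cohen reals avoid meager Borel sets of the ground model, is routine.
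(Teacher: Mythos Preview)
Your proof is correct and follows essentially the same strategy as the paper's: obtain a small witness set $\mathcal Y$ from goodness, localize $\mathcal Y$ to a small sub-extension of the Cohen forcing, and then observe that any Cohen real outside that support avoids the meager sets $\{f:\, f\Rel y\}$ for $y\in\mathcal Y$.

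The one noteworthy difference is in the choice of intermediate model. The paper splits at $\kappa$: it works over $V_\kappa$ (the extension by the first $\kappa$ Cohens) and applies goodness to the composite ``remaining $\mu\setminus\kappa$ Cohens followed by the good forcing'', which requires invoking Lemma~\ref{lem:gettinggood} to know that this composite is again $(\Rel,\lambda)$-good. You instead split at $\mu$: you work over the full Cohen extension $V[G]$ and apply goodness of $Q$ alone, then use the product structure of Cohen forcing to see that $c_\beta$ is generic over $V[G\restriction A]$ for $\beta\notin A$. Your route is marginally more self-contained in that it does not appeal to preservation of goodness under composition with Cohen forcing; the paper's route keeps the ``Cohen over a smaller model'' step to the simpler case where the smaller model is an initial segment $V_\alpha$. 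Both are routine.
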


(The 
Cohen real $c_\beta$ can
be interpreted both as Cohen generic element of  $2^\omega$ 
and as Cohen generic element of  $\omega^\omega$; we 
use the interpretation suitable for the relation $\Rel$.)

\begin{proof}
Work in the intermediate extension after $\kappa$ many Cohen reals, let us call it $V_\kappa$.
The remaining forcing (i.e., $\mu\setminus\kappa$ many Cohens composed with the good forcing) is good; so applying Definition~\ref{def:good} we get 
(in $V_\kappa$)
a set $\mathcal{Y}$ of size ${<}\lambda$. 

As the initial Cohen extension is ccc, and $\kappa\ge \lambda$ is regular,
we get some $\alpha\in\kappa$ such that each element $y$ of $\mathcal{Y}$
already exists in the extension by the first $\alpha$ many Cohens, call it
$V_{\alpha}$.
The set of reals $M_y$ bounded by $y$ is meager (and absolute).
Any $c_\beta$ for $\beta\in\kappa\setminus \alpha$
is Cohen over $V_{\alpha}$, and therefore not in $M_y$, i.e., not bounded by $y$.
As this holds for all $y$, $c_\beta$ is unbounded by 
$\mathcal Y$, and thus,
according to the definition of good, 
unbounded by $r$ as well.
\end{proof}

In the light of this result, let us revisit Lemma~\ref{lem:connection} 
with some new notation:
\begin{definition}\label{def:reward}
For $i=1,2,3$, $\lambda>\aleph_0$ regular, and $P$ a ccc forcing notion, let $\myXi(P,\lambda)$  stand for: ``There is a sequence 
$(x_\alpha)_{\alpha\in\lambda}$ of $P$-names such that 
for every $P$-name $y$ we have
$(\exists \alpha\in\lambda)\, (\forall \beta\in \lambda\setminus \alpha)\,P\Vdash \lnot x_\beta \Ri y$.''
\end{definition}

\begin{lemma}\label{lem:goodreward2}
$\myXi(P,\lambda)$ implies $\mathfrak{b}_i\le\lambda$ and $\mathfrak{d}_i\ge\lambda$. In particular:
\begin{itemize}
\item[1.]
$\myXI(P,\lambda)$ implies  $P\Vdash(\,\addnull\le\lambda\,\&\,\cofnull\ge\lambda\,)$.
\item[2.] 
$\myXII(P,\lambda)$ implies  $P\Vdash(\,\covnull\le\lambda\,\&\,\nonnull\ge\lambda\,)$.
\item[3.] 
$\myXIII(P,\lambda)$ implies  $P\Vdash(\,\mathfrak{b}\le\lambda\,\&\,\mathfrak{d}\ge\lambda\,)$.
\end{itemize}
\end{lemma}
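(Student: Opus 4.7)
The plan is to prove the generic statement $\myXi(P,\lambda)\Rightarrow \mathfrak{b}_i\le\lambda\,\&\,\mathfrak{d}_i\ge\lambda$ (forced by $P$), and then read off (1)--(3) by feeding this into Lemma~\ref{lem:connection}. So once the generic fact is established, the itemised statements follow immediately: for (1), $\mathfrak{b}_1=\addnull$ and $\mathfrak{d}_1=\cofnull$; for (2), $\covnull\le\mathfrak{b}_2$ and $\nonnull\ge\mathfrak{d}_2$; for (3), $\mathfrak{b}_3=\mathfrak{b}$ and $\mathfrak{d}_3=\mathfrak{d}$.

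For the generic statement, let $(x_\alpha)_{\alpha\in\lambda}$ witness $\myXi(P,\lambda)$. The $\mathfrak{b}_i\le\lambda$ direction is essentially tautological: I would simply argue that the family $\{x_\alpha:\alpha<\lambda\}$ is $\Ri$-unbounded in the extension. Given any $P$-name $y$ for an element of $X$, the hypothesis provides some $\alpha<\lambda$ with $P\Vdash\lnot x_\beta \Ri y$ for all $\beta\ge\alpha$ (in fact one such $\beta$ suffices), so no single $y$ $\Ri$-bounds the whole family. The family has cardinality at most $\lambda$, hence $P\Vdash \mathfrak{b}_i\le\lambda$.

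The $\mathfrak{d}_i\ge\lambda$ direction uses regularity of $\lambda$. Work in $V[G]$ and suppose, towards contradiction, that $\{y_\xi:\xi<\kappa\}$ is a $\Ri$-dominating family with $\kappa<\lambda$. For each $\xi<\kappa$, the hypothesis (applied to a name for $y_\xi$) supplies an ordinal $\alpha_\xi<\lambda$ such that $\lnot x_\beta \Ri y_\xi$ holds in $V[G]$ for every $\beta\in\lambda\setminus\alpha_\xi$. Since $\lambda$ is regular and $\kappa<\lambda$, we have $\alpha^\ast:=\sup_{\xi<\kappa}\alpha_\xi<\lambda$. Picking any $\beta\ge\alpha^\ast$, the element $x_\beta$ is not $\Ri$-bounded by any $y_\xi$, contradicting domination. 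Hence $P\Vdash \mathfrak{d}_i\ge\lambda$.

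There is no real obstacle here; the only subtlety worth flagging is that the family in the unboundedness argument is indexed by $\lambda$ but may a priori have fewer distinct realisations after forcing, which is harmless since we only need an upper bound on $\mathfrak{b}_i$. The regularity of $\lambda$ is used exactly once, in bounding $\sup_{\xi<\kappa}\alpha_\xi$ when $\kappa<\lambda$.
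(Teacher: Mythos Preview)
Your proof is correct and essentially identical to the paper's: both show $\{x_\alpha:\alpha<\lambda\}$ is $\Ri$-unbounded directly from the hypothesis, and both handle $\mathfrak d_i\ge\lambda$ by taking, for each of fewer than $\lambda$ many names $y_\xi$, the witnessing ordinal $\alpha_\xi$ from the definition of $\myXi$, then using regularity of $\lambda$ to find a $\beta$ above all of them. The paper phrases the second part with $P$-names rather than in $V[G]$, but the content is the same.
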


\begin{proof}
The set $\{x_\alpha:\, \alpha\in\lambda\}$ is certainly forced to be $\Ri$-unbounded;
and given a set $Y=\{y_j:\, j<\theta\}$ of $\theta<\lambda$ many $P$-names,
each has a bound $\alpha_j$, so for any $\beta\in\lambda$ above all $\alpha_j$ we get
$P\Vdash \lnot x_\beta \Ri y_j$ for all $j$; i.e., $Y$ cannot be dominating.
\end{proof}

\subsection{Ground model Borel functions, partial random forcing}\label{sec:borel}

The following lemma\linebreak seems to be well known (but we are not aware of a good reference or an established notation):

\begin{definition}
Let $Q$ be a forcing notion, and let $\eta$ be a $Q$-name for a real. 
We say that $Q$ is ``generically Borel determined (by $\eta$, via $B$)'', if
\begin{itemize}
\item $Q$ consists of reals,
\item the $Q$-generic filter is determined by the real $\eta$, and moreover:
\item $B\subseteq \mathbb R^2$ is a Borel relation 
such that for all $q\in Q$,
$Q\forces (\,B(q,\eta)\leftrightarrow  q\in G\,)$.
\end{itemize}
\end{definition}

We investigate iterations of such forcings:

\begin{lemma}
Assume that $(P_\beta,Q_\beta)_{\beta<\alpha}$ is a FS ccc iteration such that each $Q_\beta$ is generically Borel determined (in an absolute way already fixed in $V$).
Then for each $P_\alpha$-name $r$ of a real, there is
(in the ground model) a Borel function $F:\mathbb R^\omega\to\mathbb R$ and a sequence $(\alpha_i)_{i\in\omega}$ of ordinals in $\alpha$
such that $P_\alpha$ forces $r=F((\eta_{\alpha_i})_{i\in\omega})$.
\end{lemma}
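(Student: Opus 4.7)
My plan is to argue by induction on the iteration length $\alpha$, taking the full statement as the induction hypothesis. The base case $\alpha=0$ is immediate, since every real of $V$ is a constant (hence Borel) function of no variables.

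For the inductive step, I would first exploit ccc to extract a nice name for $r$: for each $n,k\in\omega$, choose a countable maximal antichain $A_{n,k}\subseteq P_\alpha$ whose members all decide ``$r(n)=k$''; let $A=\bigcup_{n,k}A_{n,k}$, still countable. Each $p\in A$ has finite support, and for every $\beta\in\supp(p)$ the component $p(\beta)$ is a $P_\beta$-name for an element of $Q_\beta\subseteq\mathbb R$, i.e.\ a $P_\beta$-name of a real. I would then apply the induction hypothesis at stage $\beta<\alpha$ to obtain, for each such pair $(p,\beta)$, a ground-model Borel function $F_{p,\beta}$ and a countable sequence $(\gamma^{p,\beta}_i)_{i\in\omega}$ of ordinals below $\beta$ with $P_\beta\forces p(\beta)=F_{p,\beta}((\eta_{\gamma^{p,\beta}_i})_{i\in\omega})$.

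Next, let $S\subseteq\alpha$ be the union over all $(p,\beta)$ with $p\in A$ and $\beta\in\supp(p)$ of the countable set $\{\beta\}\cup\{\gamma^{p,\beta}_i:i\in\omega\}$; it is manifestly countable, so enumerate $S=(\alpha_i)_{i\in\omega}$. The crucial observation is that, combining the standard characterization of membership in the generic for a FS iteration with the generic Borel determination of each $Q_\beta$, the statement ``$p\in G$'' for $p\in A$ is equivalent in the extension to
\[
\bigwedge_{\beta\in\supp(p)} B_\beta\bigl(F_{p,\beta}((\eta_{\gamma^{p,\beta}_i})_{i\in\omega}),\,\eta_\beta\bigr),
\]
a finite conjunction of Borel conditions on $(\eta_{\alpha_i})_{i\in\omega}$. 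Consequently ``$r(n)=k$'' is a countable Borel disjunction of such conditions over $p\in A_{n,k}$. Defining $F\colon\mathbb R^\omega\to\mathbb R$ so that $F(x)(n)$ is the unique $k$ passing the relevant test at position $n$ (and $F(x)(n)=0$ on the negligible set where no such $k$ exists) produces the desired ground-model Borel $F$ with $P_\alpha\forces r=F((\eta_{\alpha_i})_{i\in\omega})$.

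The only genuine obstacle I foresee is the bookkeeping, ensuring the accumulated index set $S$ stays countable: each invocation of the induction hypothesis can introduce new ordinals outside the original supports, but since $A$ is countable and each $\supp(p)$ is finite, only countably many invocations occur and each produces countably many new coordinates. The rest amounts to unpacking definitions---nice names from ccc, the equivalence ``$p\in G$ iff each component $p(\beta)^{G_\beta}$ lies in the $\beta$-th generic'' for FS iterations, and the translation of this latter equivalence into a Borel condition via the $B_\beta$ of generic Borel determination.
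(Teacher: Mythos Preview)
Your proposal is correct and follows essentially the same approach as the paper: both arguments proceed by induction on the iteration length, reduce a name for a real to countable antichains via ccc, and then translate ``$p\in G$'' into a Borel condition on the generic reals using the Borel determination of each $Q_\beta$. The only difference is organizational---the paper carries an explicit auxiliary induction hypothesis (``for every condition $p\in P_\gamma$, membership $p\in G_\gamma$ is Borel in the $\eta$'s'') and splits into successor and limit cases, whereas you handle all $\alpha$ uniformly by decomposing each condition into its finitely many coordinates $p(\beta)$ and applying the single induction hypothesis to those as $P_\beta$-names of reals; this is a harmless repackaging of the same content.
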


\begin{proof}
We prove by induction on $\gamma\le\alpha$: 
\begin{itemize}
\item For all $p\in P_\gamma$ 
there is a Borel relation $B^p\subseteq \mathbb{R}^{\omega}$ and a sequence $(\alpha^p_i)_{i\in\omega}$ of elements of $\gamma$ such that $P_\gamma\forces B^p((\eta_{\alpha_i^p})_{i\in\omega})\leftrightarrow p\in G_\gamma$.
\item For each $P_\gamma$-name $r$ of a real, there is a 
Borel function $F^r$ and a sequence $(\alpha^r_i)_{i\in\omega}$ of elements of $\gamma$ such that $P_\gamma\forces r= F^r((\eta_{\alpha_i^p})_{i\in\omega})$.
\end{itemize}
The second item follows from the first, as we can use the 
countable maximal antichains that decide $r(n)=m$.

If $\gamma$ is a limit ordinal, then $P_\gamma$ has no new elements, so there is nothing to do.

So assume $\gamma=\zeta+1$.  
By our assumption,
$Q_\zeta$ is generically Borel determined from $\eta_\zeta$ via
a Borel relation $B_\zeta$. 
Consider $(p,q)\in P_\zeta*Q_\zeta$. This is in $G_\gamma$ iff $p\in G_\zeta$ (which, by induction, is Borel) and $q\in G(\zeta)$.
As $q$ is a real, it is forced that $q=B^q((\alpha^q_i)_{i\in\omega})$. 
Moreover, $P_\zeta$ forces that 
$Q_\zeta$ forces that 
$q\in G(\zeta)$ iff
$B_\zeta(\eta_\zeta,q)$ iff 
$B_\zeta(\eta_\zeta,B^q((\alpha^q_i)_{i\in\omega}))$.
\end{proof}

\begin{definition}
Given $(P_\beta,Q_\beta)_{\beta<\alpha}$ as above, and 
some $w\subseteq \alpha$, we define the $P_\alpha$-name
$\mathbb{R}^w$ to consist of all
reals $r$ such that in the ground model there
are a Borel function $F$ and a sequence $(\alpha_i)_{i\in\omega}$ of elements of $w$ such that $r=F((\eta_{\alpha_i})_{i\in\omega})$.
\end{definition}

The following is straightforward:
\begin{facts}\label{facts:borel}
\begin{itemize}
\item Set (in $V$) $\mu=(|w|+2)^{\aleph_0}$. Then it is forced that
$\mathbb{R}^{w}$ has cardinality ${\le}\mu$.
\item If $w'\supseteq w$, then (it is forced that) $\mathbb{R}^{w'}\supseteq \mathbb{R}^{w}$.
\item If $w$ is the increasing union of $(w_\alpha)_{\alpha\in\gamma}$ with $\cf(\gamma)\ge \omega_1$,
then (it is forced that) $\mathbb{R}^w=\bigcup_{\alpha\in\gamma} \mathbb{R}^{w_\alpha}$.
\item For every $P_\alpha$-name $r$ of a real there is a countable $w$ such that
(it is forced that) $r\in \mathbb{R}^w$.
\end{itemize}
\end{facts}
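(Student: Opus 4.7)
The plan is to dispatch the four bullets separately; items (2) and (4) are immediate, (3) is a short cofinality argument, and (1) is the one requiring a genuine counting argument.

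Item (4) is essentially a restatement of the preceding lemma: applied to the name $r$, the lemma supplies a Borel function $F$ and a sequence $(\alpha_i)_{i\in\omega}$ of ordinals in $\alpha$ with $P_\alpha \forces r = F((\eta_{\alpha_i})_{i\in\omega})$, and then $w := \{\alpha_i : i \in \omega\}$ is a countable subset of $\alpha$ that witnesses $r \in \mathbb{R}^w$. Item (2) is by inspection: any witness $(F,(\alpha_i))$ with $\alpha_i \in w \subseteq w'$ is equally a witness that $r \in \mathbb{R}^{w'}$.

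For item (3), the inclusion $\supseteq$ follows from (2) applied to each $w_\alpha \subseteq w$. For $\subseteq$, suppose a name $r$ lies in $\mathbb{R}^w$, witnessed (in $V$) by $(F,(\alpha_i)_{i\in\omega})$ with $\alpha_i \in w = \bigcup_{\alpha < \gamma} w_\alpha$. Choose $\beta_i < \gamma$ with $\alpha_i \in w_{\beta_i}$. Since $\{\beta_i : i \in \omega\}$ is countable and $\cf(\gamma) \ge \omega_1$, this set is bounded in $\gamma$ by some $\beta^* < \gamma$; by monotonicity of the $w_\alpha$, all $\alpha_i$ then lie in $w_{\beta^*}$, so $r \in \mathbb{R}^{w_{\beta^*}}$.

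The one step with substance is item (1), and I would argue it in $V$. The set of Borel codes has cardinality $2^{\aleph_0}$ (in $V$), and the set of sequences $(\alpha_i)_{i\in\omega} \in w^\omega$ has cardinality $|w|^{\aleph_0}$, so the set $\Lambda$ of pairs $(F,(\alpha_i))$ has cardinality at most $(|w|+2)^{\aleph_0} = \mu$. To each such pair I assign the canonical $P_\alpha$-name $\dot r_{F,(\alpha_i)}$ for $F((\eta_{\alpha_i})_{i\in\omega})$. By the definition of $\mathbb{R}^w$, every element of $\mathbb{R}^w$ in the extension is the interpretation of some $\dot r_{F,(\alpha_i)}$ with $(F,(\alpha_i)) \in \Lambda$, so $P_\alpha$ forces $|\mathbb{R}^w| \le |\Lambda| \le \mu$.

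The only conceptual point to keep straight — and the thing I would call the main obstacle, though it is a mild one — is that $\mu$ is computed in $V$, not in the extension, so the bound in (1) does not require any cardinal arithmetic in $V[G]$; likewise in (3) the bound $\beta^*$ is chosen in $V$ using the $V$-cofinality of $\gamma$, which passes to the extension because we need the statement only for names witnessed by ground-model data.
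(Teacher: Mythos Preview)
Your argument is correct and is exactly the kind of routine verification the paper intends: the paper itself offers no proof beyond the remark ``The following is straightforward,'' and your treatment of each bullet matches the obvious reading of the definitions (the counting of Borel codes and index sequences for (1), monotonicity for (2), the countable-cofinality trap for (3), and a direct appeal to the preceding lemma for (4)).
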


\begin{definition}
Let $\mathbb{B}$ be (the definition of) random forcing, i.e., 
positive pruned trees $T$, ordered by inclusion.
Given $(P_\beta,Q_\beta)_{\beta<\alpha}$ as above, $w\subseteq \alpha$,
we define the $P_\alpha$-name $\mathbb{B}^w:=\mathbb{B}\cap \mathbb{R}^w$
and call it ``partial random forcing defined from $w$''.
\end{definition}

Clearly $\mathbb{B}^w$ is a subforcing (not necessarily a complete one) of 
$\mathbb{B}$,
and if $p,q$ in $\mathbb{B}^w$ are incompatible in $\mathbb{B}^w$
then they are incompatible in random forcing. In particular $\mathbb{B}^w$
is ccc.

Note that $\mathbb{B}^w$ is forced to be generically Borel determined,
in way already fixed in $V$: The 
generic real $\eta$ is defined by $\{\eta\}=\bigcap\{[s]\in G:\, s\in 2^{{<}\omega}\}$, 
and the Borel relation by ``$\eta\in  [T]$''.

\begin{remark}
In this section, we have provided a very explicit 
notion of ``partial random'', using Borel functions.
The use of Borel functions is not essential, we could use any other 
method of calculating reals from generic reals at certain restricted 
positions, provided this method satisfies Facts~\ref{facts:borel}.
%
One such alternative definition 
has been used in~\cite{MR3513558}: 
We can define the sub-forcing $P_\alpha\restriction w$ of $P_\alpha$ in a natural way, and 
require that it 
is a complete subforcing (which is a closure property of $w$).
Then we can take $Q_\alpha$
to be random forcing as evaluated in the $P_\alpha\restriction w$-extension.

While this approach is basically equivalent (and may seem slightly more natural 
than the artificial use of Borel functions), it has the disadvantage that
we have to take care of the closure property of $w$.
\end{remark}

\begin{definition}
Analogously to ``partial random'', we define the ``partial Hechler''
and ``partial amoeba'' forcings.
\end{definition}
These forcings are generically Borel determined as well.
\subsection{The inital forcing \texorpdfstring{$\Pa$}{P4}}\label{sec:bla2}


Assume that $\lambda $  is regular uncountable and $\mu<\lambda$
implies $\mu^{\aleph_0}<\lambda$.
Then $|w|<\lambda$ implies that the size of a partial forcing
defined by $w$ is ${<}\lambda$.

\begin{definition}
Assume GCH and let $\lambda_1<\lambda_2<\lambda_3<\lambda_4$ be regular cardinals.
Set $\delta_4=\lambda_4+\lambda_4$.
Partition $\delta_4\setminus\lambda_4$ into unbounded sets $S^1$, $S^2$, and $S^3$.
Fix for each $\alpha\in \delta_4\setminus\lambda_4$ some $w_\alpha\subseteq \alpha$ such that each
$\{w_\alpha:\, \alpha\in S^i\}$ is cofinal in $[\delta_4]^{{<}\lambda_i}$.\footnote{I.e.,
if $\alpha\in S^i$ then $|w_\alpha|<\lambda_i$,
and for all $u\subseteq \delta_4$, $|u|<\lambda_i$
there is some $\alpha\in S^i$ with $w_\alpha\supseteq u$.} 

We now define 
$\Pa=(P_\alpha,Q_\alpha)_{\alpha\in\delta_4}$ to be the FS ccc iteration which
first adds $\lambda_4$ many Cohen reals, and such that 
for each $\alpha\in\delta_4\setminus\lambda_4$, 
\[
\text{if $\alpha$ is in}
\left\{
\begin{array}{l}
S^1\\
S^2\\
S^3\\
\end{array}
\right\}\text{, then }
Q_\alpha\text{ is the partial }
\left\{
\begin{array}{c}
\text{amoeba}\\
\text{random}\\
\text{Hechler }\\
\end{array}\right\}
\text{ forcing defined from $w_\alpha$.}
\]

%
\end{definition}

The forcing results in $2^{\aleph_0}=\lambda_4$, which follows from the
following easy and well-known fact:
\begin{lemma}\label{lem:smallbla} 
Let $(P_\alpha,Q_\alpha)_{\alpha<\delta}$
be a FS ccc iteration of length $\delta$ 
such that each 
$Q_\alpha$ is forced to consist of real numbers, 
and set $\lambda(\delta)\coloneq (2+\delta)^{\aleph_0}$.
Then 
$P_\delta\Vdash 2^{\aleph_0}\le \lambda(\delta)$.
\end{lemma}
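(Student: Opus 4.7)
The plan is to bound the number of nice $P_\delta$-names for subsets of $\omega$ in the standard way: by ccc, each such nice name is determined by a countable sequence of countable antichains in $P_\delta$, so the number of such names is at most $|P_\delta|^{\aleph_0}$. Therefore it suffices to show $|P_\delta|\le\lambda(\delta)$, since then
\[
  2^{\aleph_0}\le |P_\delta|^{\aleph_0}\le\lambda(\delta)^{\aleph_0}=\bigl((2+\delta)^{\aleph_0}\bigr)^{\aleph_0}=(2+\delta)^{\aleph_0}=\lambda(\delta).
\]

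To bound $|P_\delta|$, I would proceed by induction on $\delta$, using the induction hypothesis $|P_\gamma|\le\lambda(\gamma)$ for every $\gamma<\delta$. At a successor step $\gamma=\zeta+1$, a condition in $P_\gamma$ is a pair $(p,\dot q)$ with $p\in P_\zeta$ and $\dot q$ a $P_\zeta$-nice name for an element of $Q_\zeta$; since $Q_\zeta$ is forced to consist of reals, $\dot q$ may be coded as a nice $P_\zeta$-name for a real, of which there are at most $|P_\zeta|^{\aleph_0}\le\lambda(\zeta)^{\aleph_0}=\lambda(\zeta)\le\lambda(\delta)$. Hence $|P_\gamma|\le\lambda(\delta)\cdot\lambda(\delta)=\lambda(\delta)$. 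At a limit step $\gamma$, the finite support convention gives $P_\gamma=\bigcup_{\alpha<\gamma}P_\alpha$ (identifying each $P_\alpha$ with the set of conditions whose support is bounded by~$\alpha$), so $|P_\gamma|\le|\gamma|\cdot\sup_{\alpha<\gamma}|P_\alpha|\le\lambda(\delta)$.

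I would make two small remarks of caution. First, ``$P_\delta$'' here means the concrete set of conditions (finite partial functions with nice-name values), not the Boolean completion; this is exactly what supports the naive counting of conditions and of antichains. Second, the cardinal arithmetic $\lambda(\delta)^{\aleph_0}=\lambda(\delta)$ is used repeatedly and is immediate from $\lambda(\delta)=(2+\delta)^{\aleph_0}$ since $\aleph_0\cdot\aleph_0=\aleph_0$.

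The only delicate point, and the one to watch out for, is the recursive counting of names: a nice name for a real at stage $\zeta$ is itself a sequence of countable antichains in $P_\zeta$, so the count $|P_\zeta|^{\aleph_0}$ is the right bound, and it is closed under exponentiating by $\aleph_0$ thanks to the choice of $\lambda(\delta)$. Everything else is a routine ccc bookkeeping argument; the hypothesis that each $Q_\alpha$ consists of reals enters solely to ensure that $P_\alpha$-names for $Q_\alpha$-elements can be counted as nice names for reals.
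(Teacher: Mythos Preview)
Your argument is correct and follows essentially the same strategy as the paper's proof: bound the size of (a dense subset of) $P_\delta$ by induction, and then count nice names for reals. The only cosmetic difference is that the paper, instead of counting nice $P_\zeta$-names for reals at the successor step, first observes that the induction hypothesis already forces $2^{\aleph_0}\le\lambda(\zeta)$, identifies $Q_\zeta$ with a subset of $\lambda(\zeta)$, and then takes the dense set $D_{\zeta+1}$ to consist of pairs $(p,\check\alpha)$ with $p\in D_\zeta$ forcing $\alpha\in Q_\zeta$; this avoids the need to speak of nice names at all and sidesteps your caveat about what exactly ``$P_\delta$'' denotes, but the arithmetic is identical.
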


\begin{proof}
By induction on $\delta$,
we show that there is a dense subforcing of $D_\delta\subseteq P_\delta$
of size ${\le}\lambda(\delta)$. Then the continuum 
has size at most $\lambda(\delta)$ (as
each name of a real corresponds to a countable 
sequence of antichains, labeled with $0,1$, in $P_\delta$,
without loss of generality in $D_\delta$).

For $\delta+1$,  $D_\delta\subseteq P_\delta$ 
is dense and has size ${\le}\lambda(\delta)$,
and $Q_\delta$ is forced to have size ${\le}\lambda(\delta)$.
Without loss of generality we can identify $Q_\delta$ with  a subset
of $\lambda(\delta)$.
Let $D_{\delta+1}$ consist of $(p,\check \alpha)\in P_{\delta+1}$ such that
$p\in D_\delta$ forces $\alpha\in Q_\delta$.

For $\delta$ limit, the union of $D_\alpha$
is dense in $P_\delta=\bigcup_{\alpha\in\delta} P_\alpha$.
\end{proof}

According to Lemma~\ref{lem:gettinggood} $\Pa$ is $(\Ri,\lambda_i)$-good
for $i=1,2,3$, 
so Lemmas~\ref{lem:bkb} and~\ref{lem:goodreward2} gives us:

%
%
%

%


\begin{lemma}\label{lem:goodreward3}
$\myXi(\Pa,\kappa)$ holds for $i=1,2,3$ and each regular cardinal $\kappa$ in $[\lambda_i,\lambda_4]$.

So in particular, $\Pa$ forces $\addnull\le\lambda_1$, $\covnull\le\lambda_2$, $\mathfrak{b}\le\lambda_3$ and $\cofnull=\nonnull=\mathfrak{d}=2^{\aleph_0}$.
\end{lemma}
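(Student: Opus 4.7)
The plan is to verify that $\Pa$ is $(\Ri,\lambda_i)$-good for each $i=1,2,3$, then apply Lemma~\ref{lem:bkb} with the initial $\lambda_4$-many Cohens to produce the witnessing $\myXi$-sequences, and finally translate goodness-of-index into the stated cardinal-characteristic bounds via Lemma~\ref{lem:goodreward2}.

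First I would check the goodness of each iterand. For $\alpha\in\delta_4\setminus\lambda_4$, the iterand $Q_\alpha$ is a partial amoeba, random, or Hechler forcing defined from $w_\alpha$. The assumption on $\lambda_i$ (and GCH) gives $|w_\alpha|<\lambda_i$ implies the partial forcing has size ${<}\lambda_i$, as spelled out just before the definition of $\Pa$. Now checking case by case via Lemma~\ref{lem:gettinggood}:
\begin{itemize}
\item For $\RI$: if $\alpha\in S^1$ then $|Q_\alpha|<\lambda_1$; if $\alpha\in S^2$ then $Q_\alpha$ is a sub-Boolean-algebra of random (being a subforcing of $\mathbb{B}$); if $\alpha\in S^3$ then partial Hechler is $\sigma$-centered.
\item For $\RII$: if $\alpha\in S^1$ then $|Q_\alpha|<\lambda_1<\lambda_2$; if $\alpha\in S^2$ then $|Q_\alpha|<\lambda_2$; if $\alpha\in S^3$ then $Q_\alpha$ is $\sigma$-centered.
\item For $\Ra$: in all three cases $|Q_\alpha|<\lambda_i<\lambda_3$ (for $i\le 3$).
\end{itemize}
The initial Cohen block is good for every $(\Rel,\lambda)$. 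Hence by Lemma~\ref{lem:gettinggood} the full iteration $\Pa$ is $(\Ri,\lambda_i)$-good, and in particular (as noted after Definition~\ref{def:good}) also $(\Ri,\kappa)$-good for every regular $\kappa\ge\lambda_i$.

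Next, fix $i\in\{1,2,3\}$ and a regular $\kappa\in[\lambda_i,\lambda_4]$. Write $\Pa$ as $\lambda_4$-many Cohens followed by the tail iteration, which is itself an FS ccc iteration of iterands satisfying the same clauses of Lemma~\ref{lem:gettinggood} and hence is $(\Ri,\lambda_i)$-good. Applying Lemma~\ref{lem:bkb} with this tail in the role of the good forcing, with $\lambda=\lambda_i$ and $\mu=\lambda_4$, yields that for every $\Pa$-name $y$ for a real, there is some $\alpha\in\kappa$ such that for all $\beta\in\kappa\setminus\alpha$, $\Pa\forces \lnot c_\beta\Ri y$. Taking $x_\alpha\coloneq c_\alpha$ (interpreted in $\omega^\omega$, $2^\omega$, or $\mathcal C$ as appropriate for $\Ri$) therefore witnesses $\myXi(\Pa,\kappa)$.

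Finally, for the ``in particular'' clause, apply Lemma~\ref{lem:goodreward2} at $\kappa=\lambda_i$ to obtain $\addnull\le\lambda_1$, $\covnull\le\lambda_2$, and $\mathfrak{b}\le\lambda_3$. Applying it at $\kappa=\lambda_4$ yields $\cofnull,\nonnull,\mathfrak{d}\ge\lambda_4$. Since $\Pa$ adds $\lambda_4$-many Cohens and its length is $\delta_4$ with $\delta_4^{\aleph_0}=\lambda_4$ (by GCH and $\cf(\lambda_4)>\omega$), Lemma~\ref{lem:smallbla} gives $2^{\aleph_0}\le\lambda_4$, so $2^{\aleph_0}=\lambda_4$ and the three upper cardinals equal $2^{\aleph_0}$. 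I do not anticipate a genuine obstacle here; the only step requiring a bit of bookkeeping is matching the three types of partial forcing against the three clauses of Lemma~\ref{lem:gettinggood} — in particular remembering that partial random really is a sub-Boolean-algebra of the random algebra, which is what lets $S^2$-stages coexist with $(\RI,\lambda_1)$-goodness even though $|Q_\alpha|$ may be as large as ${<}\lambda_2$.
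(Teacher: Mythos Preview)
Your argument is correct and follows exactly the paper's route: the paper's entire proof is the one-line remark preceding the lemma, namely that Lemma~\ref{lem:gettinggood} yields $(\Ri,\lambda_i)$-goodness of $\Pa$, after which Lemmas~\ref{lem:bkb} and~\ref{lem:goodreward2} give the conclusion (with $2^{\aleph_0}=\lambda_4$ coming from Lemma~\ref{lem:smallbla}). Your case-by-case verification of the hypotheses of Lemma~\ref{lem:gettinggood} is just an explicit unpacking of what the paper leaves implicit; the only cosmetic slip is that for $\Ra$ you should write $|Q_\alpha|<\lambda_i\le\lambda_3$ rather than $<\lambda_3$.
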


\begin{theorem}\label{thm:Pa}\cite[Thm.~2]{MR3047455}
$\Pa$ forces $\addnull=\lambda_1$, $\covnull=\lambda_2$, 
$\mathfrak{b}=\lambda_3$, and
$\mathfrak{d}=\lambda_4=2^{\aleph_0}$.
\end{theorem}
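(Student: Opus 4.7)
The plan is to separate the argument into upper bounds, already in hand from earlier lemmas, and three parallel lower bound proofs, all instances of a single ``catching'' template.

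The upper bounds come essentially for free. Lemma~\ref{lem:smallbla} applied to the iteration of length $\delta_4=\lambda_4+\lambda_4$ yields $2^{\aleph_0}\le\lambda_4$ (using $\lambda_4^{\aleph_0}=\lambda_4$ from GCH), while Lemma~\ref{lem:goodreward3} delivers $\addnull\le\lambda_1$, $\covnull\le\lambda_2$, $\mathfrak{b}\le\lambda_3$, together with $\mathfrak{d}=\cofnull=\nonnull=2^{\aleph_0}$. The initial $\lambda_4$ many Cohen reals force $2^{\aleph_0}\ge\lambda_4$, pinning down the continuum.

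For the three lower bounds I would use the same template, varying only the forcing used at the chosen stage. Suppose $\mathcal{F}$ is a family of fewer than $\lambda_i$ many $P_{\delta_4}$-names of reals (or of null sets, which are coded by reals). By Facts~\ref{facts:borel}, each $f\in\mathcal{F}$ is forced to lie in $\mathbb{R}^{w_f}$ for some countable $w_f\subseteq\delta_4$. Setting $w=\bigcup_{f\in\mathcal{F}}w_f$ yields a set of size ${<}\lambda_i$ (since $\lambda_i$ is regular and uncountable), and by the cofinality property of $\{w_\alpha:\alpha\in S^i\}$ in $[\delta_4]^{<\lambda_i}$ one can pick $\alpha\in S^i$ with $w_\alpha\supseteq w$. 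The desired inequality then follows from the characteristic property of the partial forcing $Q_\alpha$ defined from $w_\alpha$: for $i=1$, partial amoeba generically adds a single null set containing every null set coded in $\mathbb{R}^{w_\alpha}$ (hence $\addnull\ge\lambda_1$); for $i=2$, partial random generically adds a real avoiding every null set coded in $\mathbb{R}^{w_\alpha}$ (hence $\covnull\ge\lambda_2$); for $i=3$, partial Hechler generically adds a real dominating every real in $\mathbb{R}^{w_\alpha}$ (hence $\mathfrak{b}\ge\lambda_3$).

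The main obstacle I expect lies precisely in verifying these ``characteristic properties'': one must check that partial amoeba, random, and Hechler constructed from $w_\alpha$ really do catch the null sets and reals of $\mathbb{R}^{w_\alpha}$ (and not merely those of some smaller submodel), and that this catching survives being embedded into the full finite-support iteration. The key ingredient is Borel absoluteness between the model coded by $\mathbb{R}^{w_\alpha}$ and the intermediate extension by $P_\alpha$, together with the standard observation that an amoeba, random, or Hechler generic over a ccc subforcing is also generic over the Borel-coded family living inside it. Since this verification is carried out in detail in~\cite{MR3047455}, I would quote that work for the technical core rather than reprove it here.
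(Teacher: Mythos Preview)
Your proposal is correct and matches the paper's own proof essentially line for line: the upper bounds are harvested from Lemma~\ref{lem:goodreward3} (and Lemma~\ref{lem:smallbla} for the continuum), and each lower bound is obtained by collecting the countable Borel supports of fewer than $\lambda_i$ many names into a single $w$ of size ${<}\lambda_i$, then choosing $\alpha\in S^i$ with $w_\alpha\supseteq w$ so that the partial amoeba/random/Hechler at stage $\alpha$ catches the whole family. The paper is equally brief about the ``characteristic property'' step you flag, simply asserting that the partial forcing sees the relevant objects and handles them, and referring to~\cite{MR3047455} for the theorem as a whole.
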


\begin{proof}
%
It is easy to see that the partial amoebas take care of $\addnull\ge\lambda_1$:
Let $(N_i)_{i\in\mu}$, $\aleph_1\le\mu<\lambda_1$ be a family of $\Pa$-names of null sets.
Each $N_i$ is a Borel code, i.e., a real, 
and therefore Borel-computed from some countable set $w^i\subseteq \delta_4$.
The union of the $w^i$ is a
set $w^*$ of size ${\le}\mu$ that already Borel-decides all $N_i$.
There is some $\beta\in S^1$ such that $w_\beta\supseteq w^*$, 
so the partial amoeba forcing at $\beta$ sees all the null sets $N_i$ and therefore 
covers their union.

Analogously one proves 
$\covnull\ge\lambda_2$ and
$\mathfrak{b}\ge\lambda_3$.
\end{proof}

We will reformulate the proof for $\covnull$ in a cumbersome manner
that can be conveniently used later on:

\begin{lemma}\label{lem:Pcovnullislarge}
Let $\mySII(P,\lambda,\mu)$ stand for:
``$P$ is a ccc forcing notion, and 
there is a ${<}\lambda$-directed partial order $(S,\prec)$
of size $\mu$
and a sequence $(r_s)_{s\in S}$ of $P$-names for reals
such that for each $P$-name $N$ of a null set 
$(\exists s\in S)\,(\forall t\succ s)\, P\Vdash r_t\notin N$.''
\begin{itemize}
\item $\mySII(P,\lambda,\mu)$ implies 
$P\Vdash(\, \covnull\ge\lambda \,\&\, \nonnull\le \mu\,)$. 
\item $\mySII(\Pa,\lambda_2,\lambda_4)$ holds.
\end{itemize}
\end{lemma}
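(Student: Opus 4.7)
The first bullet is a direct unpacking of the definition of $\mySII(P,\lambda,\mu)$. For $P \forces \covnull \ge \lambda$, given $\kappa < \lambda$ many $P$-names $N_\alpha$ of null sets, pick witnesses $s_\alpha \in S$ and use ${<}\lambda$-directedness of $(S,\prec)$ to find a common $\prec$-upper bound $s^\star$; any strict successor $t \succ s^\star$ (which exists once $(S,\prec)$ has no $\prec$-maximal element, as will be arranged in the second bullet) yields an $r_t$ that is forced to miss every $N_\alpha$, so $\bigcup_\alpha N_\alpha$ cannot cover $2^\omega$. For $P \forces \nonnull \le \mu$: the set $\{r_s : s \in S\}$ has size at most $\mu$, and if it were forced to lie inside a null $N$, then with $s$ the witness for $N$ and any $t \succ s$ one would simultaneously have $r_t \in N$ and $P \forces r_t \notin N$, a contradiction.

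For the second bullet, the natural choice is $S := S^2$, ordered by $s \prec t$ iff $w_s \subsetneq w_t$, with $r_s := \eta_s$ the partial random generic added at stage $s$. Then $|S| = \lambda_4$ since $S^2$ is unbounded in $\delta_4$, and $(S,\prec)$ has no maximal element (enlarging any $w_s$ by one fresh ordinal gives an element of $[\delta_4]^{{<}\lambda_2}$, which is contained in some $w_t$, $t \in S^2$, by cofinality). The same cofinality yields ${<}\lambda_2$-directedness: given $\kappa < \lambda_2$ elements $s_\alpha \in S^2$, the union $u := \bigcup_\alpha w_{s_\alpha}$ has cardinality $< \lambda_2$ by regularity of $\lambda_2$, and adjoining one extra ordinal yields some $t \in S^2$ with $w_t \supsetneq w_{s_\alpha}$ for every $\alpha$.

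The crux is then to produce, for every $\Pa$-name $N$ of a null set, some $s_0 \in S^2$ such that $\Pa \forces \eta_t \notin N$ for every $t \succ s_0$. Since $N$ is (coded by) a real, Facts~\ref{facts:borel} give a countable $w^* \subseteq \delta_4$ with $\Pa \forces N \in \mathbb{R}^{w^*}$; by cofinality, choose $s_0 \in S^2$ with $w_{s_0} \supseteq w^*$. For any $t \succ s_0$, one has $w_t \supseteq w^*$, whence $\Pa \forces N \in \mathbb{R}^{w_t}$.

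It remains to see that the partial random generic $\eta_t$ avoids every null set whose Borel code is forced into $\mathbb{R}^{w_t}$. This is a density argument in $\mathbb{B}^{w_t}$: for any $T_0 \in \mathbb{B}^{w_t}$, the set $[T_0] \setminus N$ has positive Lebesgue measure, and an absolute Borel operation applied to the codes of $T_0$ and $N$ produces a positive-measure subtree $T_1 \subseteq T_0$ disjoint from $N$; its code again lies in $\mathbb{R}^{w_t}$, hence $T_1 \in \mathbb{B}^{w_t}$, and $T_1 \forces \eta_t \notin N$. This final step — the uniform Borel absoluteness of extracting a positive-measure closed subset from the complement of a Borel null set — is the only point in the plan that goes beyond bookkeeping and is where I would focus the care, though it is an entirely standard absoluteness fact.
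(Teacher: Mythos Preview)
Your proof is correct and follows essentially the same approach as the paper: take $S=S^2$ ordered by containment of the $w_s$, let $r_s$ be the partial random real added at index $s$, and argue that every null-set name is captured by a countable $w^*$, so that any $t$ with $w_t\supseteq w^*$ gives a partial random avoiding $N$. The only difference is cosmetic: the paper reads $\prec$ as non-strict (setting $s\prec t$ iff $w_s\subseteq w_t$), so that $t=s$ itself already serves as a witness and no discussion of maximal elements is required; your choice of strict containment forces you to verify an extra clause that the paper never needs. Your closing density argument in $\mathbb{B}^{w_t}$ just spells out what the paper compresses into the single phrase ``the partial random $r_t$ over $w_t$ will avoid $N$''.
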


\begin{proof}
$\covnull\ge\lambda$: 
Fix ${<}\lambda$ many $P$-names $N_\alpha$ of null sets.
Each real has a ``lower bound'' $s_\alpha\in S$, i.e.,
$P\Vdash r_t\notin N_\alpha$ whenever $t\succ s_\alpha$.
Let $t\succ s_\alpha$ for all $\alpha$  (this is possible as
$S$ is directed).  So $P\Vdash r_t\notin N_\alpha$ for every $\alpha$, i.e., the union doesn't cover the reals.

$\nonnull\le\mu$, as the set of all $r_s$ is not null: For every name $N$ of a null set there is some $s\in S$ such that 
$P\Vdash r_s\notin N$.

For $\Pa$, we set $S=S^2$, $s\prec t$ if $w_s\subseteq w_t$, and we 
let $r_s$ be the partial random real added at $s$.
A $\Pa$ name for a null set $N$ depends (in a Borel way) on 
a countable index set $w^*\subseteq \delta_4$.
Fix some $s\in S^2$ such that $w_s\supseteq w^*$,
and pick any $t\succ s$. Then $w_t$ contains all information to calculate
the null set $N$, and therefore the partial random $r_t$ over $w_t$
will avoid $N$.
\end{proof}

\section{The Boolean ultrapower of the forcing}\label{sec:partB}

\subsection{Boolean ultrapowers}

Boolean ultrapowers generalize regular ultrapowers by using arbitrary Boolean algebras
instead of the power set algebra.

\begin{assumption}
$\kappa$ is strongly compact,
$B$ is a $\kappa$-distributive,  $\kappa^+$-cc,
atomless complete Boolean algebra.
\end{assumption}

\begin{lemma}\cite{MR0166107}
Every $\kappa$-complete filter on $B$ can be extended to a $\kappa$-complete ultrafilter $U$.\footnote{For this, neither $\kappa^+$-cc nor atomless is  required,
and it is sufficient that $B$ is $\kappa$-complete.}
\end{lemma}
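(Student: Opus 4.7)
The plan is to invoke the $L_{\kappa,\kappa}$-compactness characterization of strong compactness: for strongly compact $\kappa$, any $L_{\kappa,\kappa}$-theory all of whose subtheories of size ${<}\kappa$ are satisfiable is itself satisfiable. I would work in a one-sorted language with a unary predicate $U$ and a constant $\dot b$ for each $b\in B$, and form the $L_{\kappa,\kappa}$-theory $T$ with axioms (i) $U(\dot b)$ for each $b$ in the given filter $F$; (ii) $\lnot U(\dot 0)$; (iii) $U(\dot b)\leftrightarrow \lnot U(\dot{\lnot b})$ for each $b\in B$; and (iv) $U(\dot{\bigwedge_\alpha b_\alpha})\leftrightarrow \bigwedge_{\alpha<\mu} U(\dot{b_\alpha})$ for every family $(b_\alpha)_{\alpha<\mu}$ in $B$ with $\mu<\kappa$ (permissible because $L_{\kappa,\kappa}$ admits conjunctions of length ${<}\kappa$). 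From any model $M\models T$, the set $\{b\in B:M\models U(\dot b)\}$ is a $\kappa$-complete ultrafilter on $B$ extending $F$, which is what we want.

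It then remains to show that every subtheory $T_0\subseteq T$ with $|T_0|<\kappa$ is satisfiable. Such a $T_0$ mentions only ${<}\kappa$ constants and only ${<}\kappa$ completeness instances; since strong compactness forces $\kappa$ to be inaccessible, I would build a $\kappa$-complete Boolean subalgebra $B_0\subseteq B$ of cardinality ${<}\kappa$ containing all elements mentioned in $T_0$ by closing off under complements and the ${<}\kappa$-ary meets appearing in the instances. Then $F_0:=F\cap B_0$ is a $\kappa$-complete filter on $B_0$, and a transfinite recursion through $B_0$ extends $F_0$ to a $\kappa$-complete ultrafilter $U_0$ on $B_0$: at each successor stage one adjoins either $b$ or $\lnot b$, noting that at most one can destroy the ${<}\kappa$-intersection property (otherwise ${<}\kappa$-many elements of the current filter would already meet to $0$, contradicting $\kappa$-completeness). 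Interpreting $\dot b$ by $b$ for $b\in B_0$ and $U$ by $U_0$ witnesses satisfiability of $T_0$.

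The main obstacle is precisely the limit stages of this inner recursion, where one must verify that the $\kappa$-complete closure of the union of previously built filters remains proper; this comes down to the length of the recursion being ${<}\kappa$ and $B_0$ being $\kappa$-complete of size ${<}\kappa$, so every ${<}\kappa$-sized subset of the union already lies in some $V_\xi$ along the chain and inherits that $V_\xi$'s intersection property. A parallel approach, which I would try if the $L_{\kappa,\kappa}$ route ran into bookkeeping trouble, uses the fine-ultrafilter characterization of strong compactness: choose a fine $\kappa$-complete ultrafilter $W$ on $[B]^{{<}\kappa}$ and, for each $x\in[B]^{{<}\kappa}$, a $\kappa$-complete ultrafilter $U_x$ on the $\kappa$-complete subalgebra $B_x$ generated by $x$ extending $F\cap B_x$; then set $U:=\{b\in B:\{x:b\in B_x\text{ and }b\in U_x\}\in W\}$ and verify the ultrafilter and $\kappa$-completeness properties using fineness and $\kappa$-completeness of $W$.
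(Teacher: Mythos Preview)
Your approach is essentially the paper's: encode the desired ultrafilter as a theory in an infinitary language and invoke the compactness available for strongly compact $\kappa$. The paper is more terse---it uses the propositional language $\mathcal L_\kappa$ (variables $p_b$ standing for ``$b\in U$'') rather than first-order $L_{\kappa,\kappa}$, and leaves the ${<}\kappa$-satisfiability check to the reader---while you spell that check out; one small quibble is that your $B_0$ is not actually a $\kappa$-complete subalgebra (closing under all ${<}\kappa$-meets of ${<}\kappa$ generators can already have size $\kappa$), but your construction only needs $B_0$ to contain the finitely many specific meets appearing in $T_0$, and your ultrafilter-with-${<}\kappa$-intersection-property argument then goes through as written.
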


\begin{proof}
List the required 
properties of $U$ as a set of propositional sentences in $\mathcal L_\kappa$ (a propositional language allowing conjunctions and disjunctions of any size ${<}\kappa$), using 
atomic formulas coding $b\in U$ and $b\notin U$ for $b\in B$.
\end{proof}

\begin{assumption}
$U$ is a $\kappa$-complete ultrafilter on $B$.
\end{assumption}

\begin{lemma}
There is a 
maximal antichain $A_0$ in $B$ of size $\kappa$
such that $A_0\cap U=\emptyset$. In other words, $U$ is not $\kappa^+$-complete.
\end{lemma}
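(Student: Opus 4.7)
The plan is to first show that $U$ is not $\kappa^+$-complete, and then convert the resulting witness into a maximal antichain of the claimed form. Assume for contradiction that $U$ is $\kappa^+$-complete, and recursively construct a strictly decreasing sequence $(b_\alpha)_{\alpha<\kappa^+}$ in $U$. Set $b_0=1$. At a successor stage, given $b_\alpha\in U$, use atomlessness to split $b_\alpha=c\vee d$ into nonzero disjoint pieces; exactly one of $c,d$ lies in $U$ (since $U$ is an ultrafilter containing $b_\alpha$), and let $b_{\alpha+1}$ be that one. At a limit stage $\alpha<\kappa^+$ we have $|\alpha|\le\kappa$, so by the assumed $\kappa^+$-completeness, $b_\alpha:=\bigwedge_{\beta<\alpha}b_\beta\in U$ and in particular is nonzero. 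Then the differences $b_\alpha\setminus b_{\alpha+1}$ give a pairwise disjoint family of $\kappa^+$ nonzero elements, contradicting the $\kappa^+$-cc. Hence there exists a family $(b_\alpha)_{\alpha<\mu}$ with $\mu\le\kappa$, all $b_\alpha\in U$, such that $b:=\bigwedge_{\alpha<\mu}b_\alpha\notin U$.

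Next I would convert this witness into a maximal antichain disjoint from $U$. Define
\[
c_\alpha=(1-b_\alpha)\setminus\bigvee_{\beta<\alpha}(1-b_\beta)
\]
for $\alpha<\mu$. These are pairwise disjoint, each satisfies $c_\alpha\le 1-b_\alpha\notin U$, and $\bigvee_\alpha c_\alpha=\bigvee_\alpha(1-b_\alpha)=1-b$. Since $b\notin U$, the collection $\{c_\alpha:\alpha<\mu,\,c_\alpha\neq 0\}\cup\{b\}$ (omitting $b$ if $b=0$) is a maximal antichain in $B$ of size $\le\mu+1\le\kappa$, all of whose members lie outside $U$.

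To confirm the size is exactly $\kappa$: any maximal antichain $A$ of $B$ with $A\cap U=\emptyset$ satisfies $1-a\in U$ for every $a\in A$ and $\bigwedge_{a\in A}(1-a)=1-\bigvee A=0$; if $|A|<\kappa$, the $\kappa$-completeness of $U$ would force this meet into $U$, giving $0\in U$, absurd. So $|A|\ge\kappa$, and the $\kappa^+$-cc gives $|A|\le\kappa$, whence $|A|=\kappa$.

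The main obstacle is ensuring the recursion in the first paragraph carries through all $\kappa^+$ stages; concretely, one must verify that $b_\alpha$ stays in $U$ at every limit, and this is the single place where the hypothesized $\kappa^+$-completeness is invoked and where it collides with $\kappa^+$-cc to produce the contradiction. Once this diagonal construction is in place, the passage from a non-$\kappa^+$-complete witness to a maximal antichain and the sizing argument are straightforward bookkeeping.
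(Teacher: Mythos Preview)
Your argument is correct, but it takes a considerably longer route than the paper. The paper observes in one line that $B\setminus U$ is open (as $U$ is a filter) and dense (by atomlessness: below any $b\in U$ split $b=c\vee d$ into disjoint nonzero pieces, and one of them lies outside the ultrafilter). Any maximal antichain $A_0$ in this open dense set is then automatically a maximal antichain in $B$ with $A_0\cap U=\emptyset$; the size bounds follow exactly as in your Step~3 (which is the same argument the paper uses).

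Your detour through a diagonal recursion of length $\kappa^+$ to first establish non-$\kappa^+$-completeness, then disjointify the resulting witness, is sound but unnecessary: the antichain is available immediately once you see that the complement of $U$ is dense. Your approach does make the failure of $\kappa^+$-completeness more explicit and perhaps more tangible, but at the cost of a page of bookkeeping that the paper's one-line construction avoids entirely. Both arguments use atomlessness, completeness of $B$, the $\kappa^+$-cc, and $\kappa$-completeness of $U$ in essentially the same places.
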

\begin{proof}
Let $A_0$ be a maximal antichain in the open dense set 
$B\setminus U$. As $B$ is $\kappa^+$-cc, $A_0$ has size ${\le}\kappa$. It cannot have size ${<}\kappa$, as $U$ is $\kappa$-complete
and therefore meets every antichain of size ${<}\kappa$.
\end{proof}

The Boolean algebra $B$ can be used as forcing notion. 
As usual, $V$ denotes the universe we start with, sometimes called the
ground model. In the following, we will not actually force with $B$ (or any other p.o.); we always remain in $V$,
but we still use forcing notation.
In particular, we call the usual $B$-names ``forcing names''. 

\begin{definition}
A \textdef{BUP-name} (or: labeled antichain) $x$
is a function $A\to V$ whose domain is a
maximal antichain.
We may write $A(x)$ to denote $A$.
\end{definition}

Each BUP-name corresponds to a forcing-name\footnote{More specifically, to the forcing-name $\{(\widecheck{x(a)},a):\, a\in A(x)\}$.} for an element of $V$. We will identify the BUP-name 
and the corresponding forcing-name. 
In turn, every forcing name $\tau$
for an element of $V$ has a forcing-equivalent BUP-name.

In particular, we can calculate,
for two BUP-names $x$ and $y$, 
the  Boolean value $ \lBrack x=y\rBrack$.\footnote{We can calculate
$\lBrack x=y\rBrack$ more explicitly as follows:
Pick some common refinement $A'$ of $A(x)$ and $A(y)$. 
This defines in an obvious way
BUP-names $x'$ and $y'$ both with domain $A'$:
For $a\in A'$ we set 
$x'(a)=x(\tilde a)$ for $\tilde a$ the unique element of $A(x)$
above $a$. 
Then $\lBrack x=y\rBrack$ is $\bigvee\{a\in A':\, x'(a)=y'(a)\}$ (which is independent of the refinement $A'$).}
\begin{definition}
\begin{itemize}
\item Two BUP-names $x$ and $y$ are \textdef{equivalent},
if $ \lBrack x=y\rBrack\in U$.
\item For $v\in V$, let $\check v$ be a BUP-name-version of 
the standard name for $v$ (unique up to equivalence).
\item 
The \textdef{Boolean ultrapower} $M^-$ 
consists of the equivalence classes $[x]$ of BUP-names $x$;
and we define $[x]\in^- [y]$ by $\lBrack x\in y\rBrack\in U$.
\item  $j^-:V\to M^-$ maps $v$ to $[\check v]$.
\end{itemize}
\end{definition}

We are interested in the $\in$-structure
 $(M^-,\in^-)$.

Given  BUP-names $x_1,\dots,x_n$ and an $\in$-formula $\varphi$,
the truth value
$\lBrack \varphi^V(x_1,\dots,x_n)\rBrack$ is well defined
(it is the weakest element of $B$ forcing that in the ground model $\varphi(x_1,\dots,x_n)$ holds,
which makes sense as $x_1,\dots,x_n$ are guaranteed to be in the ground model).\footnote{Equivalently, we can explicitly calculate 
$\lBrack \varphi^V(x_1,\dots,x_n)\rBrack$ as follows:
Chose a common refinement $A'$ of $A(x_1),\dots, A(x_n)$,
and set  $\lBrack \varphi^V(x_1,\dots,x_n)\rBrack$ to be 
$\bigvee\{a\in A':\, \varphi(x_1'(a),\dots,x_n'(a))\}$;
where again the BUP-names $x_i'$ are the canonically
defined BUP-names with domain $A'$ that are equivalent to $x_i$.}

\begin{lemma}
\begin{itemize}
\item {\L}o{\'{s}}'s theorem: $(M^-,\in^-)\vDash \varphi([x_1],\dots,[x_n])$ iff
$\lBrack \varphi^V(x_1,\dots,x_n)\rBrack \in U$.
\item
$j^-:(V,\in)\to (M^-,\in^-)$ is an elementary embedding.
\item In particular, $(M^-,\in^-)$ is a  ZFC model.

\end{itemize}\end{lemma}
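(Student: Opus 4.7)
The plan is to prove \L o\'s's theorem by induction on the complexity of $\varphi$; once we have this, both elementarity of $j^-$ and $\mathsf{ZFC}$ in $(M^-,\in^-)$ follow routinely. Throughout, given BUP-names $x_1,\dots,x_n$, I would first refine their domains to a common maximal antichain $A'$ and work with the equivalent names $x_i':A'\to V$; this lets us compute Boolean values as $\lBrack \varphi^V(x_1,\dots,x_n)\rBrack=\bigvee\{a\in A':\,\varphi(x_1'(a),\dots,x_n'(a))\}$, as indicated in the footnote.

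For the atomic cases $x=y$ and $x\in y$, the equivalence $(M^-,\in^-)\vDash x=y \Leftrightarrow \lBrack x=y\rBrack\in U$ holds by the very definition of $\in^-$ and of the equivalence relation on BUP-names. The Boolean connective steps use standard ultrafilter algebra: for negation we exploit that $U$ is an ultrafilter, so exactly one of $\lBrack \varphi\rBrack$ and $\lBrack \neg\varphi\rBrack=\neg\lBrack \varphi\rBrack$ lies in $U$; for conjunction we use that $U$ is closed under finite (in fact ${<}\kappa$-sized) meets and upward closed. These steps do not require $\kappa$-completeness beyond finite additivity.

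The main obstacle is the existential step, which needs a \emph{mixing argument}. Suppose $\lBrack \exists y\,\varphi^V(x_1,\dots,x_n,y)\rBrack=b\in U$. Choose a maximal antichain $A^\ast$ refining $A'$ such that each $a\in A^\ast$ with $a\le b$ decides a concrete ground-model witness, i.e. there exists $v_a\in V$ with $a\le \lBrack \varphi^V(x_1,\dots,x_n,\check v_a)\rBrack$; this is possible because for each such $a$, the statement ``there is $y\in V$ with $\varphi^V(x_1'(a),\dots,x_n'(a),y)$'' holds in $V$, and we invoke the axiom of choice to select one witness $v_a$ per piece. On the remaining antichain elements $a\not\le b$ we set $v_a$ to be anything (say $\emptyset$). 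The resulting labeled antichain $y:A^\ast\to V$, $a\mapsto v_a$, is a BUP-name satisfying $\lBrack \varphi^V(x_1,\dots,x_n,y)\rBrack\ge b\in U$. Conversely, if $\lBrack \varphi^V(x_1,\dots,x_n,y)\rBrack\in U$ for some $y$, then $\lBrack \exists y\,\varphi^V\rBrack\ge \lBrack \varphi^V(\dots,y)\rBrack\in U$, so $U$ contains it. The induction then gives $(M^-,\in^-)\vDash \exists y\,\varphi([x_1],\dots,[x_n],y)$ iff $\lBrack\exists y\,\varphi^V\rBrack\in U$.

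For elementarity of $j^-$, note that for any $v\in V$ the name $\check v$ has $\lBrack \varphi^V(\check v_1,\dots,\check v_n)\rBrack\in\{0,1\}$, equal to $1$ exactly when $V\vDash \varphi(v_1,\dots,v_n)$; combining with \L o\'s gives $(M^-,\in^-)\vDash \varphi(j^-(v_1),\dots,j^-(v_n))$ iff $V\vDash \varphi(v_1,\dots,v_n)$. Since $V\vDash \mathsf{ZFC}$ and $j^-$ is elementary with range contained in $M^-$, every $\mathsf{ZFC}$ axiom transfers to $(M^-,\in^-)$. The only subtlety worth flagging is that the argument as stated requires only $\kappa$-completeness of $U$ for the antichain-refinement and mixing (the antichain $A^\ast$ has size $\le\kappa$ by $\kappa^+$-cc, and $U$'s $\kappa$-completeness is what makes the Boolean value computations compatible with the ultrafilter), so the assumption block preceding the statement is exactly what we use.
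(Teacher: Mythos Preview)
Your proof is correct and follows essentially the same route as the paper's (induction on formula complexity, with the atomic cases by definition, negation via the ultrafilter property, and conjunction and the existential step via the filter property together with a mixing argument). One small correction to your closing remark: the \L o\'s argument itself only needs that $U$ is an ultrafilter and that $B$ is complete enough to perform the mixing, not the $\kappa$-completeness of $U$---the paper's proof explicitly invokes only ``$U$ is a filter'' at the $\exists$ step, and $\kappa$-completeness is first used in the \emph{next} lemma, for wellfoundedness.
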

\begin{proof}
Straightforward by the definition of equivalence and of $[x]\in^-[y]$, and by induction (using that 
$U$ is a filter for $\varphi\wedge \psi$ and for $\exists v\, \varphi (v)$, 
and that it is an ultrafilter for $\lnot \varphi$).
For elementarity, note that $M^-\vDash \varphi([\check x_1],\dots,[\check x_n])$ iff
$\lBrack \varphi^V(\check x_1,\dots,\check x_n)\rBrack \in U$ iff $V\vDash 
\varphi(x_1,\dots,x_n)$.
\end{proof}

\begin{lemma}
$(M^-,\in^-)$ is wellfounded.
\end{lemma}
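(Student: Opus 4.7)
The plan is to run the standard wellfoundedness argument for ultrapowers by $\sigma$-complete ultrafilters, translated into the Boolean setting. I would argue by contradiction: assume there is a sequence $([x_n])_{n\in\omega}$ of BUP-name equivalence classes with $[x_{n+1}]\in^- [x_n]$ for all $n$, and aim to extract an actual $\in$-descending chain in $V$, contradicting Foundation in the ground model.

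First I would set $b_n := \lBrack x_{n+1}\in x_n\rBrack$; by the definition of $\in^-$ each $b_n$ lies in $U$. Since $\kappa$ is strongly compact it is in particular uncountable, so $\kappa$-completeness of $U$ yields that $b := \bigwedge_{n\in\omega} b_n$ is again in $U$, and in particular nonzero in $B$.

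Next I would invoke the $\kappa$-distributivity of $B$ (hence $\sigma$-distributivity, as $\omega<\kappa$) to produce a single maximal antichain $A\subseteq B$ that simultaneously refines each of the countably many $A(x_n)$. Pick $a\in A$ with $a\le b$, which exists because $b$ is nonzero and $A$ is maximal, and let $\tilde a_n\in A(x_n)$ be the unique element above $a$. Unravelling the explicit description of $\lBrack x_{n+1}\in x_n\rBrack$ recorded in the footnote after the definition of truth values, the inequality $a\le b_n$ forces $x_{n+1}(\tilde a_{n+1})\in x_n(\tilde a_n)$ to hold in $V$. Thus $(x_n(\tilde a_n))_{n\in\omega}$ is an infinite descending $\in$-chain of ground-model sets, contradicting Foundation in $V$.

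I do not expect any real obstacle: the whole argument reduces to $U$ being countably complete and $B$ admitting a common refinement of countably many maximal antichains, both of which follow directly from the standing assumptions on $\kappa$, $B$, and $U$. The one point requiring a little bookkeeping is the transition from the BUP-names $x_n$ (which live on pairwise different antichains) to values on the uniform refinement $A$, and that is precisely what $\sigma$-distributivity supplies. Note that a slicker-sounding approach via \L{}o\'s (``$V$ satisfies Foundation, so $M^-$ does as well'') does not suffice, since wellfoundedness is not first-order; the explicit extraction of a descending chain in $V$ is unavoidable.
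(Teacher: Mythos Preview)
Your argument follows the same route as the paper's, and the structure is correct. There is one slip, however: from $b\neq 0$ and $A$ maximal you cannot in general find an $a\in A$ with $a\le b$; maximality only guarantees an $a\in A$ \emph{compatible} with $b$. This is precisely what the paper uses: it takes $a\in A$ compatible with the infimum, observes that then $a$ is compatible with each $b_n=\bigvee\{a'\in A: x'_{n+1}(a')\in x'_n(a')\}$, and concludes $a$ itself lies in that set (since $A$ is an antichain, $a\wedge\bigvee S=0$ unless $a\in S$ for any $S\subseteq A$). With ``$a\le b$'' replaced by ``$a$ compatible with $b$'' your proof goes through verbatim and coincides with the paper's.
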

\begin{proof}
This is the standard argument, using the fact that $U$ is $\sigma$-complete:

Assume $[x_{n+1}]\in^-[x_n]$ for $n\in\omega$.
Choose a common refinement $A$ of the antichains $A(x_n)$,
Again, let $x'_n$ be the BUP-names with domain $A$ equivalent to 
$x_n$. 
So, by our assumption, $u_n\coloneq \lBrack x_{n+1}\in x_n\rBrack = \bigvee\{a\in A:\, x'_{n+1}(a)\in x'_n(a)\}$
is in $U$ for each $n$. 
As $U$ is $\sigma$-complete, there is some $u\in U$ stronger than all $u_n$.
This implies: If $a\in A$ is compatible with $u$, then $a$ is compatible with
$u_n$ (for all $n$), and therefore $x'_{n+1}(a)\in x'_n(a)$ for all $n$,
a contradiction.
\end{proof}

\begin{definition}
Let $M$ be the transitive collapse of $(M^-,\in^-)$, and 
let $j:V\to M$ be the composition of $j^-$ with the collapse.
We denote the collapse of $[x]$ by $\BUP{x}$.
So in particular $\BUP{\check v}=j(v)$.
\end{definition}

\begin{lemma}
\begin{itemize}
\item $M\models \varphi(\BUP{x_1},\dots,\BUP{x_n})$ iff $\lBrack \varphi^V(x_1,\dots,x_n)\rBrack \in U$. In particular, $j:V\to M$ is an elementary embedding.
\item 
If $|Y|<\kappa$, then $j(Y)=j''Y$.
In particular, $j$ restricted to $\kappa$ is the identity.
$M$ is closed under ${<}\kappa$-sequences.
\item
$j(\kappa)\neq \kappa$. I.e., $\kappa=\crit(j)$.
\end{itemize}

\end{lemma}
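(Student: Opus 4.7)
The plan is to handle the three items in order, since each builds on the previous ones.

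For item~1, I would first note that the Mostowski collapse provides an $\in$-iso\-morph\-ism from $(M^-,\in^-)$ onto $(M,\in)$ sending $[x]\mapsto\BUP{x}$; hence $M\vDash\varphi(\BUP{x_1},\dots,\BUP{x_n})$ iff $(M^-,\in^-)\vDash\varphi([x_1],\dots,[x_n])$, and the claimed equivalence with $\lBrack\varphi^V(x_1,\dots,x_n)\rBrack\in U$ is then immediate from \L o\'s's theorem already established for $M^-$. Elementarity of $j$ follows by applying this to $x_i=\check v_i$, since $\lBrack\varphi^V(\check v_1,\dots,\check v_n)\rBrack$ is either $1$ or $0$ depending on whether $V\vDash\varphi(v_1,\dots,v_n)$.

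For item~2, the inclusion $j''Y\subseteq j(Y)$ holds by elementarity. For the reverse, if $\BUP{x}\in j(Y)=\BUP{\check Y}$, then $\lBrack x\in\check Y\rBrack=\bigvee_{y\in Y}\lBrack x=\check y\rBrack$ lies in $U$, and since $|Y|<\kappa$ and $U$ is $\kappa$-complete, at least one disjunct itself must lie in $U$ (otherwise all their negations would belong to $U$ and their meet would contradict the disjunction being in $U$); hence $\BUP{x}=j(y)$ for that $y$. That $j\restriction\kappa$ is the identity then follows by transfinite induction, since $j(\alpha)=j''\alpha=\{\beta:\beta<\alpha\}=\alpha$. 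For closure under ${<}\kappa$-sequences, I would take a sequence $(\BUP{x_i})_{i<\mu}$ with $\mu<\kappa$, invoke $\kappa$-distributivity of $B$ to produce a common refinement $A$ of $\{A(x_i):i<\mu\}$, and bundle the $x_i$'s into one BUP-name $X$ with domain $A$ defined by $X(a)=(x'_i(a))_{i<\mu}$, where $x'_i$ refines $x_i$ to $A$; \L o\'s's theorem together with $j(\mu)=\mu$ then shows that $\BUP{X}$ is a function with domain $\mu$ in $M$ satisfying $\BUP{X}(i)=\BUP{x_i}$.

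For item~3, I would use the antichain $A_0=\{a_i:i<\kappa\}$ from the preceding lemma and define the BUP-name $x$ with domain $A_0$ by $x(a_i)=i$. Since each value of $x$ is an ordinal below $\kappa$, \L o\'s's theorem yields that $\BUP{x}$ is an ordinal below $j(\kappa)$ in $M$. On the other hand, $\lBrack x=\check i\rBrack=a_i\notin U$ for each $i<\kappa$, so $\BUP{x}\neq j(i)=i$; combined with $j\restriction\kappa=\mathrm{id}$, this forces $\BUP{x}\geq\kappa$, whence $j(\kappa)>\kappa$ and $\kappa=\crit(j)$. I expect the only real obstacle to be the closure argument in item~2, which crucially uses the $\kappa$-distributivity of $B$ to ensure that ${<}\kappa$-many maximal antichains admit a common refinement; everything else is an essentially routine application of \L o\'s's theorem and $\kappa$-completeness of $U$.
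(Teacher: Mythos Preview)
Your proposal is correct and follows essentially the same route as the paper. The only notable difference is that you supply an explicit argument for ${<}\kappa$-closure of $M$ (using $\kappa$-distributivity to obtain a common refinement and bundling the names), whereas the paper simply asserts this part without proof; your item~2 argument for $j(Y)=j''Y$ via the Boolean disjunction $\bigvee_{y\in Y}\lBrack x=\check y\rBrack$ is a slightly cleaner phrasing of the paper's antichain-refinement-and-suprema argument, but the underlying idea (that $U$ must pick one of fewer than $\kappa$ alternatives) is identical.
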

\begin{proof}
If $[x]\in j^-(Y)$, then we can refine the antichain $A(x)$ to some $A'$
such that each $a\in A'$ either forces $x=y$ for some $y\in Y$, or $x\notin Y$.
Without loss of generality (by taking suprema),
we can assume different elements $a$ of $A'$ giving different values $y(a)$; i.e.,
$A'$ has size  $|Y|+1<\kappa$. So $U$ selects an element $a$ of $A'$, and as 
$\lBrack x\in Y \rBrack \in U$,  this element $a$ proves that 
$[x]=j^-(y(a))$.

We have already mentioned that there is a maximal antichain $A_0=\{a_i:\, i\in\kappa\}$  
of size $\kappa$ such that $A_0\cap U=\emptyset$. 
The BUP-name $x$ with $A(x)=A_0$ and $x(a_i)=i$ satisfies $[x]\in^- j^-(\kappa)$,
but is not equivalent to any $\check v$; so $\kappa\le \BUP{x}<j(\kappa)$.
\end{proof}

As we have already mentioned, an arbitrary forcing-name for an element of $V$
has a forcing-equivalent BUP-name, i.e., a maximal antichain labeled
with elements of $V$.
If $\tau$ is a forcing-name for an element of $Y$ ($Y\in V$),
then without loss of generality $\tau$ corresponds to a maximal antichain
labeled with elements of $Y$. We call such an object $y$ a ``BUP-name for an element
of $j(Y)$'' (and not ``for an element of $Y$'', for the obvious reason: unlike
in the case of a forcing extension, $\BUP{y}$ is generally not in $Y$,
but, by definition of $\in^-$, it is in $j(Y)$).

\subsection{The algebra and the filter}

We will now define the concrete Boolean algebra we are going to use:
\begin{definition}
Assume GCH, let $\kappa$ be strongly compact, 
and $\theta>\kappa$ regular.

$P_{\kappa,\theta}$ is the forcing notion adding $\theta$ Cohen subsets of $\kappa$.
More concretely: $P_{\kappa,\theta}$
consists of partial functions from $\theta$ to $\kappa$ with 
domain of size ${<}\kappa$, ordered by extension. 
Let $f^*:\theta\to \kappa$ be the name of the generic function.

$\mathcal B_{\kappa,\theta}$ is the complete Boolean algebra generated by $P_{\kappa,\theta}$.
\end{definition}

Clearly $\mathcal B_{\kappa,\theta}$ is $\kappa^+$-cc and $\kappa$-distributive, as 
$P_{\kappa,\theta}$ is even $\kappa$-closed.


\begin{lemma}\label{lem:embed} 
There is a 
$\kappa$-complete ultrafilter  $U$ on $B=\mathcal B_{\kappa,\theta}$ such that:
\begin{enumerate}
\renewcommand{\theenumi}{\alph{enumi}}
\item
The Boolean ultrapower gives an elementary embedding $j:V\to M$.
$M$ is closed under ${<}\kappa$-sequences.
\item
The elements $\BUP{x}$ of $M$ are exactly (the collapses of equivalence classes of) $B$-names $x$ for elements of $V$; more concretely, a function from 
an antichain (of size $\kappa$) to $V$.
We sometimes say ``$\BUP{x}$ is a mixture of $\kappa$ many possibilities''.

Similarly, for $Y\in V$, the
elements $\BUP{x}$ of $j(Y)$ correspond to the
$B$-names $x$ of elements of $Y$, i.e., antichains labeled with elements of $Y$.
\item\label{item:bla3ww}
If $|A|<\kappa$, then $j''A=j(A)$. In particular, $j$ restricted to $\kappa$ is the identity.
\item\label{item:jokappa} $j$ has critical point $\kappa$, $\cf(j(\kappa))=\theta$,
and $\theta\le j(\kappa)\le \theta^+$.
\item\label{item:joflambda} If $\lambda> \kappa$ is regular, then $\max(\theta,\lambda)\le j(\lambda) <\max(\theta,\lambda)^+$.
\item\label{item:cofinal} If $S$ is a ${<}\lambda$-directed partial order, and $\kappa<\lambda$,
then $j''S$ is cofinal in $j(S)$.
\item\label{item:cofinalities}
If $\cf(\alpha)\neq \kappa$, then
$j''\alpha$ is cofinal in $j(\alpha)$, so in particular
$\cf(j(\alpha))=\cf(\alpha)$.
\end{enumerate}
\end{lemma}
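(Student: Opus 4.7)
The plan is to choose $U$ so that the name $f^*:\theta\to\kappa$ for the generic function produces a strictly increasing, cofinal sequence in $j(\kappa)$. Let $F$ be the filter on $B$ generated by the Boolean values $\lBrack \check\gamma \leq f^*(\beta)\rBrack$ (for $\beta<\theta$, $\gamma<\kappa$) and $\lBrack f^*(\beta_1)<f^*(\beta_2)\rBrack$ (for $\beta_1<\beta_2<\theta$). This $F$ is $\kappa$-complete: any conjunction of ${<}\kappa$ of these generators can be realized by a single $p\in P_{\kappa,\theta}$, whose domain is a subset of $\theta$ of size ${<}\kappa$ on which we can consistently assign values in $\kappa$ respecting the finite conjunction of lower bounds and strict inequalities (topologically sort the strict inequalities and embed into $\kappa$). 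Extend $F$ to a $\kappa$-complete ultrafilter $U$.

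Items (a)--(c) follow essentially from the general Boolean ultrapower framework of the preceding subsection: elementarity, wellfoundedness, and the ${<}\kappa$-closure of $M$ are already proved; the description of BUP-names in (b) is a direct consequence of the $\kappa^+$-cc of $B$ (so antichains have size $\leq\kappa$); and (c) was stated and proved there. For (d), our choice of $U$ makes $(\BUP{f^*(\beta)})_{\beta<\theta}$ strictly increasing and bounded below by $\kappa$, so $\theta\leq j(\kappa)$ and $\crit(j)=\kappa$. The cofinality $\cf(j(\kappa))=\theta$ follows from a support argument: any BUP-name $y$ for an element of $\kappa$ is given by an antichain of size ${\leq}\kappa$, each element of which is determined by ${<}\kappa$ coordinates, so $y$ has a ``support'' $S_y\subseteq\theta$ of size ${\leq}\kappa$; for $\beta\in\theta\setminus S_y$, the condition ``$y<f^*(\beta)$'' is dense in $B$, giving $\lBrack y<f^*(\beta)\rBrack=1\in U$ and $\BUP{y}<\BUP{f^*(\beta)}$ in $j(\kappa)$. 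The same argument rules out any cofinal sequence of length ${<}\theta$ (the union of supports would still miss some $\beta<\theta$). The upper bound $|j(\kappa)|\le\theta^+$ is obtained by counting BUP-names, using GCH to bound $|B|\le\theta^+$ and antichains by size ${\leq}\kappa$.

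Part (e) is analogous: for regular $\lambda>\kappa$, counting labelled antichains yields $|j(\lambda)|\le\max(\theta,\lambda)^+$ (under GCH), while the lower bound follows from $j(\lambda)\ge\lambda$ (a general fact for elementary $j$) together with $j(\lambda)>j(\kappa)\ge\theta$. For (f), any $\BUP{x}\in j(S)$ is represented by an antichain labeled with ${\leq}\kappa$ many elements of $S$; since $S$ is ${<}\lambda$-directed and $\kappa<\lambda$, there is $s\in S$ above all those labels, whence $\lBrack x\preceq\check s\rBrack=1\in U$ and $\BUP{x}\le j(s)$ in $j(S)$. For (g), split on $\cf(\alpha)$: if $\cf(\alpha)<\kappa$, apply (c) to a cofinal sequence in $\alpha$ of length ${<}\kappa$; if $\cf(\alpha)>\kappa$, apply (f) to $\alpha$ as a ${<}\cf(\alpha)$-directed linear order. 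I expect the main obstacle to be the cofinality and support argument in (d): verifying carefully that the $\kappa^+$-cc and $\kappa$-closure of $P_{\kappa,\theta}$ really do confine every BUP-name for an element of $\kappa$ to a support of size ${\leq}\kappa$, and that this forces cofinality $\theta$ in $j(\kappa)$ rather than only $\leq\theta$.
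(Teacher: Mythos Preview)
Your treatment of (a)--(c), (e), (f), (g) is essentially the paper's, and correct. The gap is in (d), specifically the cofinality claim $\cf(j(\kappa))=\theta$.

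You assert that for $\beta\in\theta\setminus S_y$ the set of conditions forcing $y<f^*(\beta)$ is dense, hence $\lBrack y<f^*(\beta)\rBrack=1\in U$. This is false: take any $p\in P_{\kappa,\theta}$ with $\beta\in\operatorname{dom}(p)$ and $p(\beta)=0$. Then every $p'\le p$ still forces $f^*(\beta)=0$, so no extension of $p$ forces $y<f^*(\beta)$. Thus $\lBrack y<f^*(\beta)\rBrack\ne 1$, and since your filter $F$ was generated only by the monotonicity conditions $\lBrack f^*(\beta_1)<f^*(\beta_2)\rBrack$ and the fixed lower bounds $\lBrack\check\gamma\le f^*(\beta)\rBrack$, there is no reason the arbitrary $\kappa$-complete extension $U\supseteq F$ should contain $\lBrack y<f^*(\beta)\rBrack$. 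Concretely, nothing in $F$ forces $f^*(0)+1<f^*(1)$, even though $f^*(0)<f^*(1)$ is in $F$. So you have not shown that $(\BUP{f^*(\beta)})_{\beta<\theta}$ is cofinal in $j(\kappa)$.

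The paper avoids this by building the cofinality requirement directly into the filter: for every BUP-name $x$ for an element of $\kappa$ and every $\delta\in\theta$ above the supports of the antichain elements of $x$, it puts $b_{x,\delta}:=\lBrack f^*(\delta)>x\rBrack$ into the generating set. The work is then to check that these $b_{x,\delta}$ generate a $\kappa$-complete filter, which is done by an explicit recursive construction of a single condition below any ${<}\kappa$ many of them. With this richer filter, cofinality is immediate: for any $\BUP{x}\in j(\kappa)$ there is a suitable $\delta$ with $b_{x,\delta}\in U$, hence $\BUP{x}<\BUP{f^*(\delta)}$. Your ``support'' idea is exactly the right ingredient, but it has to be used \emph{before} choosing $U$, not after.
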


\begin{proof}
We have already seen (a)--(c).

(\ref{item:jokappa}):
For each $\delta\in\theta$, 
$f^*(\delta)$ is a forcing-name for an element of $\kappa$, and thus a BUP-name
for an element of $j(\kappa)$.
Let $x$ be some other BUP-name for an element of $j(\kappa)$,
i.e., an antichain $A$ of size $\kappa$
labeled  with elements of $\kappa$. 
Let $\delta\in\theta$ be bigger than
the supremum of $\supp(a)$ for each $a\in A$.
We call such a pair $(x,\delta)$ ``suitable'', and set
$b_{x,\delta}\coloneq \lBrack  f^*(\delta)>x\rBrack $.
We claim that all these elements form a basis for a  $\kappa$-complete filter.
To see this, fix suitable pairs
$(x_i,\delta_i)$ for $i<\mu$ where $\mu<\kappa$; we have to show that $\bigwedge_{i\in\mu} b_{x_i,\delta_i}\neq \mathbb{0}$. Enumerate $\{\delta_i:\, i\in \mu\}$ increasing (and without repetitions)
as $\delta^j$ for $j\in\gamma\le \mu$. 
Set $A_j=\{i:\, \delta_i=\delta^j\}$.
Given $q_j$,  
define $q_{j+1}\in P_{\kappa,\theta}$ as follows:
$q_{j+1}\le q_j$; $\delta^j\in \supp(q_{j+1})\subseteq \delta^j\cup\{\delta^j\}$; and
$q_{j+1}\restriction \delta^j$ 
decides for all $i\in A_j$ 
the values of $x_i$ to be some $\alpha_i$; and 
$q_{j+1}(\delta^j)=\sup_{i\in A_j}(\alpha_i)+1$.
For $j\le\gamma$ limit, let $q_j$ be the union of $\{ q_k:\, k<j\}$. Then $q_{\gamma}$ is stronger than each $b_{x_i,\delta_i}$.

As $\kappa$ is strongly compact, we can extend the $\kappa$-complete filter generated by all $b_{x_i,\delta_i}$
to a $\kappa$-complete ultrafilter $U$.
Then the sequence $(\BUP{f^*(\delta)})_{\delta\in \theta}$ 
is strictly increasing (as $(f^*(\delta),\delta')$ is suitable
for all $\delta<\delta'$) and cofinal in $j(\kappa)$ (as we have just seen); 
so $\cf(j(\kappa))=\theta$.

(\ref{item:joflambda}):
We count all BUP-names for elements of $j(\lambda)$. As we can assume
that the antichains are subsets of $P_{\kappa,\theta}$, which has size
$\theta$, and as $\lambda$ is regular and GCH holds, 
we get $|j(\lambda)|\le [\theta]^{\kappa}\times \lambda^{\kappa}=\max(\theta,\lambda)$.

(\ref{item:cofinal}): An element $\BUP{x}$ of $j(S)$ is a mixture 
of $\kappa$ many possibilities in $S$. 
As $\kappa<\lambda$, there is some $t\in S$
above all the possibilities. Then $j(t)>\BUP{x}$.

(\ref{item:cofinalities}): Set $\mu=\cf(\alpha)$,  and pick an increasing cofinal sequence $\bar\beta=(\beta_i)_{i\in\mu}$ in $\alpha$.
$j(\bar\beta)$ is increasing cofinal in $j(\alpha)$ (as this is absolute between $M$ and $V$). If $\mu<\kappa$, then $j''\bar\beta=j(\bar\beta)$, otherwise use (f).
\end{proof}

\subsection{The ultrapower of a forcing notion}\label{ss:a3}

We now investigate the relation of a forcing notion $P\in V$
and its image $j(P)\in M$, which we use 
as a forcing notion over $V$. (Think of $P$ 
as being one of the forcings of Section~\ref{sec:partA}; it has no relation with the Boolean algebra $B$.)

Note that as $j(P)\in M$ and $M$ is transitive, every $j(P)$-generic filter $G$ over
$V$ is trivially generic over $M$ as well, and we will use absoluteness between $M[G]$ and $V[G]$ to prove various properties of $j(P)$.

\begin{lemma} If $P$ is $\kappa$-cc, then
$j$ gives a complete embedding from $P$ into $j(P)$. I.e.,
$j'' P$ is a complete subforcing of $j(P)$, and $j$ is an isomorphism
from $P$ to $j'' P$.
\end{lemma}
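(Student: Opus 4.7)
The plan is to split the statement into two parts: (i) $j$ restricted to $P$ is a poset-isomorphism onto its image $j''P$, and (ii) $j''P$ is a complete subforcing of $j(P)$.

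For (i) I would simply invoke elementarity together with absoluteness of the defining relations of $P$. Since $p\le_P q$ and ``$p$ and $q$ are incompatible in $P$'' are first-order statements with parameter $P$, the elementarity of $j\colon V\to M$ gives $p\le_P q\iff j(p)\le_{j(P)} j(q)$ and similarly for incompatibility, and $j$ is injective. Thus $j''P$, equipped with the order inherited from $j(P)$, is canonically isomorphic to $P$ via $j$.

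For (ii) I would use the standard criterion that an order-and-incompatibility-preserving embedding is complete as soon as it sends every maximal antichain of $P$ to a maximal antichain of $j(P)$. Fix a maximal antichain $A\subseteq P$. Since $P$ is $\kappa$-cc we have $|A|<\kappa$, and then by Lemma~\ref{lem:embed}(c) we have $j(A)=j''A$. On the other hand ``$A$ is a maximal antichain in $P$'' is a first-order statement about $(A,P)$, so elementarity of $j$ yields that $j(A)$ is a maximal antichain in $j(P)$. Combining these two facts, $j''A=j(A)$ is a maximal antichain in $j(P)$, as required.

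There is no real obstacle here beyond correctly identifying where $\kappa$-cc gets used: it enters solely to guarantee $|A|<\kappa$ so that Lemma~\ref{lem:embed}(c) applies and $j''A$ coincides with the genuinely elementary object $j(A)$. Without this cardinality bound, $j(A)$ could contain elements arising as $\kappa$-mixtures of elements of $A$ that do not lie in $j''A$, and $j''A$ could fail to be predense in $j(P)$.
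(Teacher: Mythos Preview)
Your proof is correct and follows essentially the same route as the paper: first show $j$ is a poset-isomorphism onto $j''P$ via elementarity (and absoluteness of incompatibility between $M$ and $V$), then preserve maximal antichains using $|A|<\kappa$ together with $j''A=j(A)$. The only place the paper is marginally more explicit is in noting that elementarity gives ``$j(A)$ is a maximal antichain of $j(P)$'' \emph{in $M$}, and then invoking absoluteness (transitivity of $M$, $j(P)\in M$) to conclude the same in $V$; you fold this into the elementarity step, which is fine since you already flagged the role of absoluteness at the outset.
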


\begin{proof}
It is clear that $j$ is an isomorphism onto $j''P$:
By definition the order $<_{j(P)}$ on $j(P)$ is $j(<_P)$, 
and by elementarity $p\le_P q$
iff $j(q)<_{j(P)} j(p)$. Also, $p\perp q$ is preserved:
$M\vDash p\perp_{j(P)}q$ by elementarity, so
$p\perp_{j(P)}q$ holds in $V$ (as $j(P)\in M$ and $M$ is transitive).

It remains to be shown that each maximal antichain $A$ of $P$ 
is preserved, i.e., $j''A\subseteq j(P)$ is predense.

By our assumption, $|A|<\kappa$, so $j''A = j(A)$ (by Lemma~\ref{lem:embed}(\ref{item:bla3ww})), which is maximal in $M$ (by
elementarity) and thus maximal in $V$ (by absoluteness).
\end{proof}

Accordingly, we can canonically translate $P$-names into $j(P)$-names, etc.

For later reference, let us make this a bit more explicit:
Let $g$ be a $P$-name for a real (i.e., an element of $\omega^\omega$).
Each $g(n)$ is decided by a maximal antichains $A_n$,
where $a\in A_n$ forces $g(n)=g_{n,a}\in\omega$. Then the $j(P)$-name
$j(g)$ corresponds to the antichains 
\begin{equation}\label{eq:bla}
j(A_n)=j'' A_n\text{, and }j(a)\text{ forces }
j(g)(n)=g_{n,a}\text{ for each }a\in A_n.
\end{equation}

\begin{lemma}
If $P=(P_\alpha,Q_\alpha)_{\alpha<\delta}$ is a finite support (FS) ccc iteration of length $\delta$, then 
$j(P)$ is a FS ccc iteration of length $j(\delta)$ (more formally: it is canonically equivalent to one).
\end{lemma}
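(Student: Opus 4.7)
The plan is to verify two ingredients separately: that $j(P)$ carries the finite-support iteration structure when viewed from $V$ (via elementarity and absoluteness), and that each intermediate stage $j(P)_\beta$ is ccc in $V$ (via a Boolean-ultrapower argument). By elementarity, in $M$ the sequence $(j(P)_\beta,j(Q)_\beta)_{\beta<j(\delta)}$ satisfies the recursive definition of a FS iteration: $j(P)_{\beta+1}=j(P)_\beta * j(Q)_\beta$ at successors, and $j(P)_\gamma$ is the finite-support limit of $(j(P)_\beta)_{\beta<\gamma}$ at limits. Two-step composition, finiteness, and union are absolute first-order operations on the parameters in $M$, so the same sequence is a FS iteration of length $j(\delta)$ in $V$ as well.

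For the ccc part I would argue by induction on $\beta\le j(\delta)$ that each $j(P)_\beta$ is ccc in $V$; once this is known, that each $j(Q)_\beta$ is forced ccc over $V[G_\beta]$ follows by the usual lift (an antichain of size $\omega_1$ in $j(Q)_\beta$ over $V[G_\beta]$, together with any $p\in G_\beta$ forcing pairwise incompatibility, produces an antichain of size $\omega_1$ in $j(P)_{\beta+1}$ in $V$). Suppose toward contradiction $\{p_i\}_{i<\omega_1}\subseteq j(P)_\beta$ is an antichain in $V$. Each $p_i\in M$ is the collapse of a BUP-name $\tilde p_i$, i.e.\ an antichain in $B$ labeled with conditions of $P_\beta$; the whole $\omega_1$-sequence $(\tilde p_i)$ lies in $V$. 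Incompatibility $p_i\perp p_j$ in $j(P)_\beta$ translates via {\L}o{\'{s}} to $b_{ij}\coloneq \lBrack \tilde p_i\perp_{P_\beta}\tilde p_j\rBrack\in U$; since $\omega_1<\kappa$ and $U$ is $\kappa$-complete, $b\coloneq\bigwedge_{i<j}b_{ij}\in U$ and in particular $b\neq\mathbb{0}$. Forcing $B$ below $b$ produces in $V[G_B]$ an $\omega_1$-sized antichain in the ground-model poset $P_\beta$; but $B$ is $\kappa$-distributive with $\omega_1<\kappa$, so this antichain already lies in $V$, contradicting ccc of $P_\beta$ in $V$.

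The main obstacle is this ccc transfer: one must represent a $V$-antichain in $j(P)_\beta\subseteq M$ by $B$-names living in $V$, translate incompatibility through {\L}o{\'{s}}, and combine $\kappa$-completeness of $U$ (packaging the Boolean values of pairwise incompatibility into a single nonzero condition) with $\kappa$-distributivity of $B$ (pushing any $V[G_B]$-antichain of size $\omega_1$ back to $V$). Once ccc at each level is in hand, together with the FS structure of the first paragraph, this gives $j(P)$ as a FS ccc iteration of length $j(\delta)$ in $V$.
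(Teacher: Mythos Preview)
Your ccc argument is correct in spirit but overcomplicated, and contains a notational slip: for $\beta\in j(\delta)\setminus\delta$ (which occurs whenever $\delta\ge\kappa$) there is no poset ``$P_\beta$'' in $V$ to serve as the target for your BUP-name labels. What actually makes the argument work is that each $p_i\in j(P)_\beta\subseteq j(P_\delta)$ is represented by a $B$-antichain labeled with conditions in $P_\delta$, and incompatibility in $j(P)_\beta$ agrees with incompatibility in $j(P_\delta)$; then your {\L}o{\'{s}}/completeness/distributivity chain produces an $\omega_1$-antichain in the ccc poset $P_\delta$. But all of this (and the unnecessary induction on $\beta$) is subsumed by the paper's one-line argument: $M$ is closed under ${<}\kappa$-sequences, so any $\aleph_1$-sized $A\subseteq j(P)$ already lies in $M$; $M$ thinks $j(P)$ is ccc by elementarity, and incompatibility is absolute.

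The genuine gap is in your first paragraph. Two-step composition is \emph{not} absolute between $M$ and $V$: with $P^*_\alpha,Q^*_\alpha\in M$, the $V$-computed $P^*_\alpha*Q^*_\alpha$ uses $V$'s class of $P^*_\alpha$-names, which properly contains $M$'s, so $M$'s successor stage $P^*_{\alpha+1}$ is a proper subset of what $V$ would build. (Direct limits are fine; successors are the issue.) The paper addresses exactly this: it defines a genuine FS iteration $(\tilde P_\alpha,\tilde Q_\alpha)$ in $V$ with $\tilde Q_\alpha:=Q^*_\alpha$, and proves by induction that $P^*_\alpha$ is \emph{dense} in $\tilde P_\alpha$ (given $(p,\dot q)\in\tilde P_{\alpha+1}$, strengthen $p$ to decide the $V$-name $\dot q$ to some fixed name $q'\in M$, then strengthen into $P^*_\alpha$ by induction). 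This density is precisely what justifies the parenthetical ``canonically equivalent to one'' in the lemma; your absoluteness claim skips it.
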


\begin{proof}
$M$ certainly thinks that $j(P)=(P^*_\alpha,Q^*_\alpha)_{\alpha<j(\delta)}$ is a
FS iteration of length $j(\delta)$.

By induction on $\alpha$ we define the 
FS ccc iteration $(\tilde P_\alpha,\tilde Q_\alpha)_{\alpha<j(\delta)}$ and show that 
$P^*_\alpha$ is a dense subforcing of  $\tilde P_\alpha$:
Assume this is already the case for $P^*_\alpha$.
$M$ thinks that $Q^*_\alpha$ is a $P^*_\alpha$-name,
so we can interpret it as a $\tilde P_\alpha$-name
and use it as $\tilde Q_\alpha$.
Assume that $(p,q)$ is an element (in $V$) of $\tilde P_\alpha*\tilde Q_\alpha$.
So $p$ forces that $q$ is a name in $M$; we can increase $p$
to some $p'$ that decides $q$ to be the name $q'\in M$. 
By induction we can further increase $p'$ to $p''\in P^*_\alpha$,
then $(p'',q')\in P^*_{\alpha+1}$ is stronger than $(p,q)$.
(At limits there is nothing to do, as we use FS iterations.)

$j(P)$ is ccc, as any $A\subseteq j(P)$ of size $\aleph_1$ is 
in $M$ (and $M$ thinks that $j(P)$ is ccc).
\end{proof}

Similarly, we get:
\begin{itemize}
\item 
If $\tau=\BUP{x}$ is in $M$ a $j(P)$-name for an element of $j(Z)$,
then $\tau$ is a mixture of $\kappa$ many $P$-names for an element of $Z$
(i.e., the BUP-name $x$ consists  of
an antichain $A\subseteq B$ labeled, without loss of generality, with $P$-names for elements of $Z$).

(This is just the instance of ``each $\BUP{x}\in j(Y)$
is a mixture of elements of $Y$'', where we set $Y$ to be the
set\footnote{Formally: We set $Y$ to be some set that contains representatives of each equivalence class of $P$-names of elements of $Z$.}  of $P$-names for elements of $Z$.)

\item 
A $j(P)$-name $\tau$
for an element of $M[G]$ has an equivalent $j(P)$-name in $M$.

(There is a maximal antichain $A$ of $j(P)$
labeled with $j(P)$-names in $M$. 
As $M$ is countably closed, this labeled antichain is in $M$, 
and gives a $j(P)$-name in $M$ equivalent to $\tau$.)

\item

In $V[G]$, $M[G]$ is closed under ${<}\kappa$ sequences.

(We can assume the names to be in $M$ and use ${<}\kappa$-closure.)

\item
In particular, every $j(P)$-name for a real, a Borel-code, 
a countable sequence of reals, etc., is in $M$ (more formally: 
has an equivalent name in $M$).

\item If each iterand is forced to consist of reals, 
then $j(P)$ forces the continuum to have size at most $|2+j(\delta)|^{\aleph_0}$.

(This follows from Lemma~\ref{lem:smallbla}
as $j(P)$ also satisfies that 
each iterand consists of reals.)
\end{itemize}

\subsection{Preservation of values of characteristics}

\begin{lemma}\label{lem:simple}
Let $\lambda$  be a regular uncountable cardinal and $P$ a ccc forcing.
\begin{enumerate}
\renewcommand{\theenumi}{\alph{enumi}}
\item Let $\mathfrak{x}$ be either $\addnull$ or $\mathfrak{b}$.
If $P\Vdash \mathfrak{x} = \lambda$ and $\kappa\neq \lambda$, then
$j(P)\Vdash \mathfrak{x} = \lambda$.
\item Let $\mathfrak{y}$ be either $\cofnull$ or $\mathfrak{d}$.
If $P\Vdash\mathfrak{y}\ge\lambda$  and $\kappa<\lambda$, then $j(P)\Vdash\mathfrak{y}\ge\lambda$.
\item Let $(\mathfrak{x},\mathfrak{y})$ be either
$(\mathfrak{b},\mathfrak{d})$ or $(\addnull,\cofnull)$.
Then we get:
\\
If $P\Vdash 
(\,\kappa < \mathfrak{x}\,\&\, \mathfrak{y}\le \lambda\,)$ 
then $j(P)\Vdash\mathfrak{y}\le\lambda$.
\end{enumerate}
\end{lemma}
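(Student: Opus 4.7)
The plan rests on two main tools. First, closure of $M$ under ${<}\kappa$-sequences yields that $V[G]$ and $M[G]$ share the same reals, Borel codes, and countable sequences of reals, so every $\Ri$-relation between reals is absolute between them. Second, every $j(P)$-name for a real is equivalent to one in $M$ of the BUP-form $\tau=\BUP x$ with $x:A\to\{P\text{-names for reals}\}$ a labeled maximal antichain in $B$ of size $|A|\leq\kappa$. By {\L}o{\'{s}}, for $\tau=\BUP x$, $\tau'=\BUP y$ and any $\Ri$-like statement $\varphi$, $M\models j(P)\forces\varphi(\tau,\tau')$ iff $\lBrack P\forces\varphi(x,y)\rBrack\in U$; this Boolean value equals $\bigvee\{a\in A':P\forces\varphi(x(a),y(a))\}$ for a common refinement $A'$, and whenever $P$ forces the pointwise relation for every $a$ in a maximal antichain it is $\mathbb 1\in U$ and $\varphi$ lifts to the $j(P)$-level.

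For \textbf{(c)}, I would use $P\forces(\kappa<\mathfrak x\wedge\mathfrak y\leq\lambda)$ to build in $V$ a ${<}\kappa^+$-directed poset $(D,\prec)$ of $P$-names for reals with $|D|\leq\lambda$ such that $\alpha\prec\beta$ implies $P\forces f_\alpha\Ri f_\beta$ and $\{f_s:s\in D\}$ is $\Ri$-dominating (close a $P$-name for a $\lambda$-sized $\Ri$-dominating family under $\kappa$-sized sups; GCH and $\kappa<\cf(\lambda)=\lambda$ keep the size at $\lambda$). Then $\{j(f_s):s\in D\}$ witnesses $\mathfrak y^{V[G]}\leq\lambda$: given $\dot g=\BUP x\in M$, pick $s_a\in D$ bounding $x(a)$ for each $a$, take a common upper bound $s^*\in D$ by ${<}\kappa^+$-directedness, and apply {\L}o{\'{s}} to obtain $j(P)\forces\dot g\Ri j(f_{s^*})$ since the relevant Boolean value covers $A$.

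For \textbf{(b)}, suppose for contradiction $j(P)$ forces a size-$\mu<\lambda$ $\Ri$-dominating family, enumerated as $(\dot f_\xi)_{\xi<\mu}$ with $\dot f_\xi=\BUP{x_\xi}\in M$. The underlying $P$-names $\{x_\xi(a):\xi<\mu,\,a\in A_\xi\}$ number $\leq\mu\cdot\kappa<\lambda$ (using $\kappa<\lambda$ and $\lambda$ regular), so by $P\forces\mathfrak y\geq\lambda$ there is a $P$-name $\dot g$ with $P\forces\lnot\dot g\Ri x_\xi(a)$ for all $\xi,a$; {\L}o{\'{s}} promotes this to $j(P)\forces\lnot j(\dot g)\Ri\dot f_\xi$ for every $\xi$, contradicting $\Ri$-domination. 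For the lower bound in \textbf{(a)}, the same extraction with ``has a common $\Ri$-bound $\dot h$'' in place of ``not dominating'' handles the $\kappa<\lambda$ subcase; in the $\kappa>\lambda$ subcase $\mu<\lambda<\kappa$, so the sequence of names sits in $M$ by closure, and $M[G]\models\mathfrak x=j(\lambda)=\lambda$ (since $j\restriction\kappa=\mathrm{id}$) delivers the bound inside $M[G]\subseteq V[G]$.

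For the upper bound in \textbf{(a)}, fix in $V$ a $P$-name for a $\leq_i^*$-increasing, $\Ri$-unbounded sequence $(\dot f_\alpha)_{\alpha<\lambda}$ of length $\mathfrak b_i=\lambda$ (standard for $i\in\{1,3\}$ with $\lambda$ regular). The subchain $\{j(\dot f_\alpha):\alpha<\lambda\}$ is $\leq_i^*$-increasing in both $M[G]$ and $V[G]$; by Lemma~\ref{lem:embed}(g), $\cf(\lambda)=\lambda\neq\kappa$ gives $j''\lambda$ cofinal in $j(\lambda)$, so a $V[G]$-bound $h$ of the subchain, transferred to $M[G]$ by same-reals-absoluteness and composed with $\leq_i^*$, would bound the whole $j(\lambda)$-chain in $M[G]$, contradicting $M[G]\models\mathfrak b_i=j(\lambda)$. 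The main obstacle I foresee is uniformly handling the $\kappa<\lambda$ vs $\kappa>\lambda$ dichotomy in part~(a), in particular combining the BUP mixture extraction (relying on $\mu\cdot\kappa<\lambda$) with the closure argument ``$M$ absorbs the whole size-${<}\kappa$ sequence'' into one coherent argument.
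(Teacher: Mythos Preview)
Your proof is correct and, for parts (a) and (b), essentially matches the paper's argument: decompose each $j(P)$-name into a $\kappa$-sized mixture of $P$-names, use $\mu\cdot\kappa<\lambda$ to find the required witness, and for the upper bound in (a) exploit that $j''\lambda$ is cofinal in $j(\lambda)$ when $\kappa\neq\lambda$.

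The one genuine difference is in part (c). You pre-process the dominating family into a ${<}\kappa^+$-directed system (using GCH to keep its size at $\lambda$), so that after extracting the $\kappa$ many underlying $P$-names $x(a)$ and their individual bounds $s_a\in D$, you can pick a single ground-model index $s^*$ above all of them. The paper avoids this closure step entirely: it takes the original $\lambda$-sized dominating family $\bar f$ as-is, observes that since $P\Vdash\kappa<\mathfrak b_i$ there is a $P$-\emph{name} $\alpha\in\lambda$ such that $f_\alpha$ dominates all $\kappa$ possibilities, and then uses a short antichain argument (the maximal antichain in $P$ deciding $\alpha$ maps to a maximal antichain in $j(P)$) to conclude that $j(P)$ forces $j(f_\alpha)\in j''\bar f$. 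Your route is a bit more work and needs $\lambda^\kappa=\lambda$, but it has the advantage of fitting the same directed-system template ($\mySII$) that the paper uses later for $(\covnull,\nonnull)$; the paper's route is slicker and GCH-free for this lemma. Either way the conclusion is the same.
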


\begin{proof}
\textbf{(a)} We formulate the proof for $\addnull$; the  proof for $\mathfrak{b}$ is the same.

Let $\bar N=(N_i)_{i<\lambda}$ be $P$-names for an increasing sequence of null sets such that $\bigcup_{i<\lambda} N_i$ is not null.
So in particular
for every $P$-name $N$ of a null set: 
$(\exists i_0\in\lambda)\, (\forall i\in \lambda\setminus i_0)\, P\Vdash N_i \nsubseteq N$.
(We can choose the $i_0$ in $V$ due to ccc.)

Therefore $M$ thinks that the same holds for the sequence $j(\bar N)$
of $j(P)$-names of length $j(\lambda)$.
So whenever $N$ is a $j(P)$-name of a null set,
we can assume without loss of generality that $N\in M$, 
so $M$ thinks that from some $i_0$ on it is forced that $N_i\nsubseteq N$,
which is absolute.


As $\kappa\neq \lambda$, we know that $j''\lambda$ is cofinal in $j(\lambda)$.
So (since the sequence $j(\bar N)$ is increasing) we
can use $(j(N_i))_{i\in \lambda}$ and get the same property.

This shows that $j(P)\Vdash \addnull\le \lambda$

For the other inequality, fix some 
$\chi< \lambda$, and 
$(N_i)_{i<\chi}$ a family of $j(P)$-names for null sets (without loss of generality each name is in $M$), and $p\in j(P)$.
\begin{itemize}
\item Case 1: $\kappa\ge \lambda$.
Then the sequence $(N_i)_{i<\chi}$ (as well as $p$) is in $M$, and $M\models\left( p\Vdash \bigcup N_i \text{ null}\right)$; which is absolute.
\item Case 2: $\kappa<\lambda$. 
Every $N_i$ is a ``mixture''  of $\kappa$ many $P$-names for null sets, so there is a single $P$-name $N'_i$ such that $P$ forces $N'_i$ is superset of all the names involved. Therefore, $j(P)$ forces that $j(N'_i)\supseteq N_i$.
And $P$ forces that $\bigcup_{i<\chi} N'_i$ is null, i.e., covered by some null set
$N^*$. Then $j(P)$ forces that $j(N^*)$ covers $\bigcup_{i<\chi} N_i$.
\end{itemize}

\textbf{(b)} We show that a small set cannot be dominating:
Fix a sequence $(f_i)_{i<\chi}$ of $j(P)$-names of reals, with $\chi<\lambda$.
Each $f_i$ 
corresponds to $\kappa<\lambda$ many possible $P$-names.
As $\chi<\lambda$, there is a 
$P$-name $g$ unbounded by all $\chi\times \kappa<\lambda$ many 
possible $P$-names.
So if $f$ is any of the possibilities, then $P$ 
forces $g\nle^* f$; and thus 
$j(P)$ forces $j(g) \nle^* f_i$ for all $i$.
So $j(P)$ forces $\mathfrak{d}\ge\lambda$. 

The same proof
works for $\cofnull$ (using ``the null set $g$ is not a subset of any 
of the possible null sets'').

\textbf{(c)}
For $(\mathfrak{x},\mathfrak{y})=(\mathfrak{b},\mathfrak{d})$:
Fix  a $P$-name of a dominating family $\bar f=(f_i)_{i\in\lambda}$.

We claim that $j(P)$ forces that $j''\bar f=(j(f_i))_{i<\lambda}$ is dominating.
Let $r$ be a $j(P)$-name of a real, i.e., 
a mixture of $\kappa$ many possibilities (each possibility 
corresponding to a $P$-name for a real).
As $P\Vdash \kappa<\mathfrak{b}$, 
$P$ forces that these reals cannot be unbounded, i.e., there is a $P$-name $\alpha\in\lambda$
such that $f_\alpha$ is forced 
to dominate all the possibilities.
By absoluteness, $j(P)\Vdash j(f_\alpha)>^* r$.

It remains to be shown that $j(P)\Vdash j(f_\alpha)\in j''\bar f$.
(Note that $\alpha$ is just a $P$-name.)
Fix a maximal antichain $A$ in $P$ deciding $\alpha$,
i.e., $a\in A$ forces $\alpha=\alpha(a)$.
As $j$ maps $P$ completely into $j(P)$,
$j''A$ is a maximal antichain in $j(P)$.
So $j(P)$ forces that exactly on $j(a)$ for $a\in  A$
is in the generic filter, cf.\ \eqref{eq:bla}.
Accordingly $j(f_\alpha)=j(f_{\alpha(a)})\in j''\bar f$.

The proof for $\cofnull$ is the same.
\end{proof}

For the other direction of the invariants, and the pair $(\covnull, \nonnull)$, 
we use the following two lemmas, which are reformulations of results of Shelah.\footnote{S.~Shelah, personal communication.}


Recall Definition~\ref{def:reward} (which is useful
because of Lemma~\ref{lem:goodreward2} and satisfied for the inital forcing 
according to Lemma~\ref{lem:goodreward3}).
\begin{lemma}\label{lem:covnullsmallpreserved}
Assume $\myXi(P,\lambda)$. Then $\myXi(j(P),\cf(j(\lambda)))$.
\\
So if $\kappa\ne \lambda$, then $\myXi(j(P),\lambda)$, and if $\kappa=\lambda$, then $\myXi(j(P),\theta)$.
\end{lemma}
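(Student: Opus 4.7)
The plan is to push the given witness $\bar x = (x_\alpha)_{\alpha<\lambda}$ through $j$ and then thin the resulting long sequence down to cofinality. Set $\bar x^{*} := j(\bar x) = (x^{*}_\alpha)_{\alpha<j(\lambda)}$. By elementarity, $M$ sees that $\bar x^{*}$ witnesses $\myXi(j(P), j(\lambda))$ internally: for every $j(P)$-name $y$ in $M$ for a real, there exists $\alpha<j(\lambda)$ such that $j(P)\Vdash \lnot x^{*}_\beta \Ri y$ for every $\beta \in j(\lambda)\setminus \alpha$.

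To return this to $V$, I must check the same property for every $j(P)$-name $y$ that lives only in $V$. Two ingredients from Section~\ref{ss:a3} do the job: since $j(P)$ is ccc and $M$ is closed under countable sequences in $V$, every $j(P)$-name for a real in $V$ is forcing-equivalent to a $j(P)$-name in $M$ (a name for a real is coded by $\omega$ many countable antichains in $j(P)\in M$, and such a countable object is in $M$); and since each $\Ri^n$ is a closed, absolutely defined subset of the relevant product space, the statement ``$j(P)\Vdash \lnot x^{*}_\beta \Ri y$'' is absolute between $M$ and $V$. Replacing an arbitrary $y\in V$ by an $M$-equivalent name lifts the $M$-statement of the first step to $V$.

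Now I thin $\bar x^{*}$. Choose an increasing cofinal sequence $(\gamma_i)_{i<\cf(j(\lambda))}$ in $j(\lambda)$ and set $y_i := x^{*}_{\gamma_i}$. Given any $j(P)$-name $y$, the $V$-side tail property from the previous step yields $\alpha<j(\lambda)$; by cofinality, some $i_0$ satisfies $\gamma_{i_0}\ge\alpha$, and then $j(P)\Vdash \lnot y_i \Ri y$ for all $i\ge i_0$. This verifies $\myXi(j(P),\cf(j(\lambda)))$. The last sentence of the lemma is then immediate from Lemma~\ref{lem:embed}: $\lambda$ is regular, so $\cf(\lambda)=\lambda$, and item (g) gives $\cf(j(\lambda))=\lambda$ when $\kappa\ne\lambda$, while item (d) gives $\cf(j(\kappa))=\theta$ when $\kappa=\lambda$.

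The only step demanding any thought is the $M$-to-$V$ transfer in the second paragraph; the rest is a direct unpacking of elementarity and cofinality. That obstacle is defused by the two properties already set up, namely ccc-ness of $j(P)$ and ${<}\kappa$-closure of $M$, so the argument is genuinely short.
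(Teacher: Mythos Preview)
Your proof is correct and follows essentially the same route as the paper: push the witnessing sequence through $j$, use elementarity in $M$, transfer to $V$, and restrict to a cofinal subsequence. You are more explicit than the paper about the $M$-to-$V$ transfer (the paper simply writes ``This is absolute''), but the underlying argument is identical.
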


\begin{proof}
Let $\bar y=(y_\alpha)_{\alpha<\lambda}$ be the sequence of $P$-names
witnessing $\myXi(P,\lambda)$. Note that $j(\bar y)$ is a sequence of length
$j(\lambda)$; we denote the $\beta$-th element by $(j(\bar y))_\beta$.
So $M$ thinks:
For every $j(P)$-name $r$ of a real 
$(\exists \alpha\in j(\lambda))\, (\forall \beta \in j(\lambda)\setminus \alpha)\, \lnot (j(\bar y))_\beta \Ri r$.
This is absolute. In particular, pick in $V$ a cofinal subset $A$ of $j(\lambda)$ 
of order type $\cf(j(\lambda))=:\mu$. Then $j(\bar y)\restriction A$ witnesses that 
$\myXi(j(P),\mu)$ holds. 
\end{proof}

We have seen in Lemma~\ref{lem:Pcovnullislarge} that 
$\mySII(\Pa,\lambda_2,\lambda_4)$ holds and implies that 
$\Pa$ forces $\covnull\ge \lambda_2$ and $\nonnull\le\lambda_4$ (the latter being  
trivial in the case of $\Pa$).

\begin{lemma}\label{lem:covnulllargepreserved}
Assume $\mySII(P,\lambda,\mu)$. If $\kappa> \lambda$,
then $\mySII(j(P),\lambda,|j(\mu)|)$;
if 
$\kappa< \lambda$,
then $\mySII(j(P),\lambda,\mu)$.
\end{lemma}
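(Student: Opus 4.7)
The plan is to take, in both cases, the witness for $\mySII(j(P),\cdot,\cdot)$ to be essentially the $j$-image of the given witness $(S,\prec),(r_s)_{s\in S}$ for $\mySII(P,\lambda,\mu)$, but the precise choice depends on whether $\lambda$ sits below or above the critical point $\kappa$. In $M$, by elementarity, the triple $(j(S),j(\prec),(r'_t)_{t\in j(S)})$ witnesses $\mySII(j(P),j(\lambda),j(\mu))$, where $r'_t$ denotes the name indexed by $t$ in $j(\bar r)$; and for any $j(P)$-name $N$ of a null set we may assume $N\in M$ (since names of reals have equivalents in $M$ by $<\kappa$-closure), so by absoluteness the $M$-statement ``$(\exists s\in j(S))(\forall t\succ_{j(\prec)} s)\; j(P)\Vdash r'_t\notin N$'' transfers to $V$. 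What has to be checked in each case is (i) the size of the partial order we pick, and (ii) that the order is ${<}\lambda$-directed in $V$ (not just in $M$).

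Case $\kappa>\lambda$. Here I would take the partial order to be $(j(S),j(\prec))$ itself; its cardinality in $V$ is by definition $|j(\mu)|$, and the sequence of $j(P)$-names is $(r'_t)_{t\in j(S)}$. Since $\lambda<\kappa=\crit(j)$, Lemma~\ref{lem:embed}(\ref{item:bla3ww}) gives $j(\lambda)=\lambda$, so $M$ thinks $j(S)$ is $<\lambda$-directed. For any $A\subseteq j(S)$ with $|A|<\lambda<\kappa$, $A$ lies in $M$ by the $<\kappa$-closure of $M$, so $M$ produces an upper bound of $A$ in $j(S)$, which is then an upper bound in $V$. Directedness thus transfers, and combined with the absoluteness argument above this gives $\mySII(j(P),\lambda,|j(\mu)|)$.

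Case $\kappa<\lambda$. Now $j(S)$ may be too large to be $<\lambda$-directed in $V$, since subsets of size $<\lambda$ need not lie in $M$. Instead I would take the partial order $(j''S,j(\prec))$, whose $V$-cardinality is $\mu$ (as $j$ is injective). For $<\lambda$-directedness in $V$: any $A\subseteq j''S$ with $|A|<\lambda$ pulls back to $j^{-1}[A]\subseteq S$ of size $<\lambda$; applying ${<}\lambda$-directedness of $(S,\prec)$ in $V$ gives some $s_0\in S$ above $j^{-1}[A]$, and then $j(s_0)\in j''S$ is above $A$. The name sequence is $(j(r_s))_{s\in S}$, indexed by $j''S$ via $s\mapsto j(s)$. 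For the null-set property, given a $j(P)$-name $N$ of a null set we again assume $N\in M$, obtain some $s\in j(S)$ from the $M$-statement, and use Lemma~\ref{lem:embed}(\ref{item:cofinal}) (applicable since $(S,\prec)$ is ${<}\lambda$-directed and $\kappa<\lambda$) to find $s_0\in S$ with $j(s_0)\succ_{j(\prec)} s$; then every $t\succ s_0$ in $S$ satisfies $j(t)\succ_{j(\prec)} j(s_0)\succ s$, so $j(P)\Vdash j(r_t)\notin N$, as required.

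The main obstacle I anticipate is exactly the asymmetry in the two cases, i.e.\ realizing that ${<}\lambda$-directedness of $j(S)$ in $V$ only comes for free when $\lambda<\kappa$, and otherwise one must replace $j(S)$ by the $V$-cofinal subset $j''S$ and use Lemma~\ref{lem:embed}(\ref{item:cofinal}) to relay the null-avoiding property from an upper bound $s\in j(S)$ down to one in $j''S$. Everything else is elementarity, $<\kappa$-closure of $M$, and the fact that names of reals (hence of null sets) over $j(P)$ can be taken to lie in $M$.
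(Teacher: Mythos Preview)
Your proposal is correct and follows essentially the same route as the paper's own proof: use $(j(S),j(\prec))$ together with $j(\bar r)$ when $\kappa>\lambda$, and restrict to the cofinal copy $j''S$ when $\kappa<\lambda$, invoking Lemma~\ref{lem:embed}(\ref{item:cofinal}) to pass from a bound in $j(S)$ to one in $j''S$. You are in fact slightly more explicit than the paper in justifying why ${<}\lambda$-directedness of $j(S)$ transfers from $M$ to $V$ in the case $\kappa>\lambda$ (via ${<}\kappa$-closure of $M$), which the paper compresses into a single clause.
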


\begin{proof}
Let $(S,\prec)$ and $\bar r$ witness $\mySII(P,\lambda,\mu)$.

$M$ thinks that
\begin{multline}\tag{$*$}\label{eq:ff}
\text{for each $j(P)$-name $N$ of a null set }\\
(\exists s\in j(S))\,(\forall t\in j(S))\, t\succ s\rightarrow  j(P)\Vdash (j(\bar r))_t\notin N,
\end{multline}
which is absolute.

If $\kappa>\lambda$, then $j(\lambda)=\lambda$,  and $j(S)$ is $\lambda$-directed in $M$ and therefore in $V$ as well, and so we get $\mySII(j(P),\lambda,|j(\mu)|)$.

So assume $\kappa<\lambda$.
We claim that $j''S$ and $j''\bar r$ witness $\mySII(j(P),\lambda,\mu)$.
$j''S$ is isomorphic to $S$, so directedness is trivial. 
Given a $j(P)$-name $N$, without loss of generality in $M$, there is 
in $M$ a bound $s\in j(S)$ as in~\eqref{eq:ff}. As $j''S$ is cofinal in $j(S)$ (according
to Lemma~\ref{lem:embed}(\ref{item:cofinal})), there is some $s'\in S$ such that $j(s')\succ s$.
Then for all $t'\succ s'$, i.e., $j(t')\succ j(s')$, we get $j(P)\Vdash j(r_t)\notin N$.
\end{proof}

\subsection{The main theorem}

We now have  everything required for the main result:

\begin{theorem}
Assume GCH and that $\aleph_1<\kappa_7<\lambda_1<\kappa_6<\lambda_2<\kappa_5<\lambda_3<\lambda_4<\lambda_5<\lambda_6<\lambda_7$
are regular, $\kappa_i$ strongly compact for $i=5,6,7$.
Then there is a ccc order $\PaVII$ forcing
\begin{multline*} 
\addnull=\lambda_1 < \covnull =\lambda_2 <\mathfrak{b}=\lambda_3 <\\< \mathfrak{d}=\lambda_4 < \nonnull=\lambda_5 < \cofnull=\lambda_6 < 2^{\aleph_0}=\lambda_7.
\end{multline*} 
\end{theorem}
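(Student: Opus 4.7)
The plan is to define $\PaVII$ as the threefold iterated Boolean ultrapower of $\Pa$: set $\PaV := j_5(\Pa)$, $\PaVI := j_6(\PaV)$, and $\PaVII := j_7(\PaVI)$, where each $j_i : V \to M_i$ is the Boolean ultrapower from Lemma~\ref{lem:embed} applied with $\kappa = \kappa_i$ and $\theta = \lambda_i$. The critical points are $\crit(j_5) \in (\lambda_2, \lambda_3)$, $\crit(j_6) \in (\lambda_1, \lambda_2)$, and $\crit(j_7) \in (\aleph_1, \lambda_1)$; in particular each $\kappa_i$ is strictly below $\mathfrak{b} = \lambda_3$ and distinct from $\lambda_1, \lambda_2, \lambda_3, \lambda_4$. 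The proof proceeds by tracking each invariant through each ultrapower using the preservation lemmas.

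Three invariants are held constant at every step. The small invariants $\addnull = \lambda_1$ and $\mathfrak{b} = \lambda_3$ are preserved by Lemma~\ref{lem:simple}(a), since $\kappa_i \notin \{\lambda_1, \lambda_3\}$. The invariant $\mathfrak{d} = \lambda_4$ is pinned by Lemma~\ref{lem:simple}(b) (lower bound, since $\kappa_i < \lambda_4$) combined with Lemma~\ref{lem:simple}(c) applied to $(\mathfrak{b}, \mathfrak{d})$ (upper bound, since $\kappa_i < \mathfrak{b}$). Finally $\covnull = \lambda_2$ is controlled by Lemma~\ref{lem:covnullsmallpreserved} applied to $\myXII(\Pa, \lambda_2)$ for the upper bound, and Lemma~\ref{lem:covnulllargepreserved} applied to $\mySII(\Pa, \lambda_2, \lambda_4)$ for the lower bound; the latter additionally propagates the upper bound $\nonnull \leq \lambda_5$ through all three ultrapowers.

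The substantive work is to push $\nonnull$ up to $\lambda_5$ and $\cofnull$ up to $\lambda_6$, while the continuum opens to $\lambda_7$. For this I would apply Lemma~\ref{lem:bkb} directly to each ultrapowered forcing. Viewed from $V$, $\PaV$ is a FS ccc iteration that starts with $|j_5(\lambda_4)| = \lambda_5$ many Cohen reals followed by partial amoeba, random, and Hechler iterands. Since $\crit(j_5) > \lambda_2$ gives $j_5(\lambda_i) = \lambda_i$ for $i = 1, 2$, a direct verification using Lemma~\ref{lem:gettinggood} shows the iterands are $(\RI, \lambda_1)$-good and $(\RII, \lambda_2)$-good (partial amoebas have size ${<}\lambda_1$; partial randoms are subalgebras of random of size ${<}\lambda_2$; partial Hechlers are $\sigma$-centered). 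Lemmas~\ref{lem:bkb} and~\ref{lem:goodreward2} then yield $\cofnull, \nonnull \geq \lambda_5$ in $\PaV$, and Lemma~\ref{lem:smallbla} provides $2^{\aleph_0} \leq \lambda_5$; together, $\nonnull = \cofnull = 2^{\aleph_0} = \lambda_5$ in $\PaV$.

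In $\PaVI$, partial randoms may now reach size $\lambda_6$, but they remain $(\RI, \lambda_1)$-good as subalgebras of random, so Lemma~\ref{lem:bkb} still gives $\myXI(\PaVI, \lambda_6)$ and hence $\cofnull \geq \lambda_6$; the upper bound $\nonnull \leq \lambda_5$ carries over via Lemma~\ref{lem:covnulllargepreserved} (since $\kappa_6 < \lambda_2$), and the lower bound $\nonnull \geq \lambda_5$ via Lemma~\ref{lem:covnullsmallpreserved} applied to $\myXII(\PaV, \lambda_5)$. Thus $\nonnull = \lambda_5$ and $\cofnull = 2^{\aleph_0} = \lambda_6$ in $\PaVI$. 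Finally $j_7$ has $\crit(j_7) < \addnull = \lambda_1$, which is exactly the hypothesis of Lemma~\ref{lem:simple}(c) for the pair $(\addnull, \cofnull)$: this preserves $\cofnull \leq \lambda_6$, and Lemma~\ref{lem:simple}(b) preserves $\cofnull \geq \lambda_6$; all other invariants survive as before, while the continuum opens to $\lambda_7$. The main technical obstacle is verifying, at each step, that the iterands of the ultrapowered forcing (computed in $V$, not in $M_i$) continue to satisfy the size and centering hypotheses of Lemma~\ref{lem:gettinggood}, which depends delicately on whether $\crit(j_i)$ lies above or below each $\lambda_j$.
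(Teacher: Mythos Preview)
Your proof is correct and agrees with the paper's argument on most points: the construction of $\PaVII$ as a threefold Boolean ultrapower, the use of Lemma~\ref{lem:simple} for $\addnull$, $\mathfrak{b}$, $\mathfrak{d}$, and for $\cofnull$ at the final step, and the use of $\mySII$ via Lemma~\ref{lem:covnulllargepreserved} for the pair $(\covnull,\nonnull)$.

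The one genuine methodological difference is how you obtain $\myXII(\PaV,\lambda_5)$ and $\myXI(\PaVI,\lambda_6)$. The paper uses a \emph{seeding} trick: it observes that $\myXII(\Pa,\kappa_5)$ and $\myXI(\Pa,\kappa_6)$ already hold (by Lemma~\ref{lem:goodreward3}, since $\kappa_5>\lambda_2$ and $\kappa_6>\lambda_1$), and then invokes Lemma~\ref{lem:covnullsmallpreserved} to transport these through the ultrapowers, using that $\cf(j_5(\kappa_5))=\lambda_5$ and $\cf(j_6(\kappa_6))=\lambda_6$. This is entirely black-box: one never inspects the iterands of $j_i(\Pa)$. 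You instead re-establish goodness of the ultrapowered iteration in $V$ by checking that the iterands of $\PaV$ and $\PaVI$ still satisfy the hypotheses of Lemma~\ref{lem:gettinggood} (small amoebas, partial randoms as sub-Boolean-algebras of the random algebra, $\sigma$-centered Hechlers), and then apply Lemma~\ref{lem:bkb} directly. This works because $M_i$ is countably closed (so the iterands, being defined by Borel functions, are computed the same way in $M_i$ and in $V$) and because the relevant $\lambda_j$ lie below $\crit(j_i)$ at the right stages. Your approach is more hands-on and makes explicit why the ordering $\kappa_7<\lambda_1<\kappa_6<\lambda_2<\kappa_5$ matters for the iterand sizes; the paper's approach is cleaner and generalises more readily, since it avoids reopening the structure of the ultrapowered forcing. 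Both routes rely on exactly the same placement of the strongly compact cardinals, just for slightly different reasons.
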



\begin{proof}

Let $j_i: V\to M_i$ be the Boolean ultrapower embedding with
$\cf(j(\kappa_i))=\lambda_i$ (for $i=5,6,7$).
Recall that $\Pa$ is an iteration of length $\delta_4$. We set 
$\PaV\coloneq j_5(\Pa)$, $\PaVI\coloneq j_6(\PaV)$, and $\PaVII\coloneq j_7(\PaVI)$;
and 
$\delta_5\coloneq j_5(\delta_4)$,
$\delta_6\coloneq j_6(\delta_5)$ and $\delta_7\coloneq j_7(\delta_6)$.


It is enough to show the following:

\begin{enumerate}
\renewcommand{\theenumi}{\alph{enumi}}
\item\label{item:ccc} 
$\Pai$ is a FS ccc iteration of length $\delta_i$ and forces  $2^{\aleph_0}=\lambda_i$
for $i=4,5,6,7$.
\item\label{item:simple} $\Pai\Vdash (\, \addnull=\lambda_1 \,\&\,
\mathfrak{b}=\lambda_3\,\&\,
\mathfrak{d}=\lambda_4\,)$ for $i=4,5,6,7$.
\item\label{item:large}
$\Pai\Vdash \nonnull\ge \lambda_5$ for $i=5,6,7$.\\
$\Pai\Vdash \cofnull\ge \lambda_6$ for $i=6,7$.\\
$\Pai\Vdash \covnull\le\lambda_2$ for $i=4,5,6,7$.
\item\label{item:d}
$\Pai\Vdash \cofnull= \lambda_6$ for $i=6,7$.
%
\item\label{item:covnulllarge} $\Pai \vDash 
(\, \covnull\ge \lambda_2 \, \& \, \nonnull\le \lambda_5\,)$ for
$i=4,5,6,7$.
\end{enumerate}

(\ref{item:ccc}) was shown in Section~\ref{ss:a3}. 

(\ref{item:simple}): For $\Pa$ this is~Theorem~\ref{thm:Pa}.
For $\PaV$ use Lemma~\ref{lem:simple} 
(using for $\mathfrak{d}$ that $\kappa_5<\lambda_3$).
Using the same lemma again we get the result for $\PaVI$ and $\PaVII$
(using that $\kappa_i<\lambda_3$ for $i=6,7$ as well.)

(\ref{item:large}):
As $\kappa_5>\lambda_2$, we have
$\myXII(\Pa,\kappa_5)$ (by Lemma~\ref{lem:goodreward3}),
and thus $\myXII(\PaV,\lambda_5)$ (by Lemma~\ref{lem:covnullsmallpreserved},
as $\cf(j_5(\kappa_5))=\lambda_5$),
so $\PaV\Vdash \nonnull\ge\lambda_5$ (Lemma~\ref{lem:goodreward2}).
Repeating the same argument we get $\myXII(\Pai,\lambda_5)$ for $i=6,7$
(as $\kappa_i\neq \lambda_5$ for $i=6,7$).

Analogously, as $\kappa_6>\lambda_1$, we start with 
$\myXI(\Pa,\kappa_6)$, get  
$\myXI(\PaV,\kappa_6)$  (as $\kappa_5\neq\kappa_6$)
and then $\myXI(\PaVI,\lambda_6)$ (as $\cf(j_6(\kappa_6))=\lambda_6$) and
$\myXI(\PaVII,\lambda_6)$ (again as $\kappa_7\neq\lambda_6$). 
So we get  thus
$\Pai\Vdash \cofnull\ge\lambda_6$ for $i=6,7$.

Similarly, 
$\myXII(\Pa,\lambda_2)$ holds, which is preserved by all embeddings, so we get
$\covnull\le \lambda_2$.

(\ref{item:d}):
As $\PaVI$ forces the continuum to have size $\lambda_6$, 
the previous item implies $\PaVI\Vdash \cofnull=\lambda_6$.
And as in (\ref{item:simple}), this implies the same for $\PaVII$ (as $\kappa_7<\lambda_1$, the
value of $\addnull$).

(\ref{item:covnulllarge}): 
$\mySII(\Pa,\lambda_2,\lambda_4)$  holds (cf.~Lemma~\ref{lem:Pcovnullislarge}).
So by Lemma~\ref{lem:covnulllargepreserved} for the case $\kappa>\lambda$,
and as $|j_5(\lambda_4)|=\lambda_5$, according to~Lemma~\ref{lem:embed}(\ref{item:joflambda}),
$\mySII(\PaV,\lambda_2,\lambda_5)$ holds. I.e., $\PaV$ forces $\covnull\ge \lambda_2$ and $\nonnull\le\lambda_5$
(the latter being trivial as the continuum has size $\lambda_5$).
For $i=6,7$, the same lemma, now for the case $\kappa<\lambda$, gives
$\mySII(\Pai,\lambda_2,\lambda_5)$, i.e., $\Pai$ forces $\covnull\ge \lambda_2$ and  $\nonnull
\le \lambda_5$.
\end{proof}

\subsection{An alternative}

In the same way we can prove the consistency of
\[
\aleph_1 < \addnull < \covnull < \nonmeager < \covmeager < \nonnull < \cofnull < 2^{\aleph_0}.\]
(I.e., we can replace $\mathfrak{b}$ and $\mathfrak{d}$ by $\nonmeager$ and $\covmeager$, respectively.)

For this, we use the following relation as $\Ra$:
\[
f \Ra g\text{, if  }f,g\in \omega^\omega\text{ and }(\forall^* n\in\omega)\, f(n)\neq g(n).
\]
By a result of~\cite{MR671224,MR917147} (cf.~\cite[2.4.1~and~2.4.7]{BJ}) we have
\[
\nonmeager=\mathfrak{b}_3\text{ and }
\covmeager=\mathfrak{d}_3.
\]
As before, we use that an iteration where each iterand has size ${<}\lambda_3$
is $(\Ra,\lambda_3)$-good.

To define $\Pa$, we use partial eventually different (instead of 
partial Hechler) forcings.

Unlike for $(\mathfrak{b},\mathfrak{d})$, we do not know whether
$\nonmeager=\lambda$ is generally preserved if $\kappa\neq\lambda$
and $\covmeager=\lambda$ is preserved if $\kappa$ is small;
but we can use the same argument for
$(\nonmeager,\covmeager)$ that we have used for $(\covnull,\nonnull)$.
So we can get the analog of
Lemma~\ref{lem:Pcovnullislarge}
that proves that 
$\nonmeager$ is large and 
$\covmeager$ small; and $\myXIII$ 
implies that $\nonmeager$ is small and 
$\covmeager$ large.

\bibliographystyle{alpha}
\bibliography{8values}

\begin{thebibliography}{FGKS17}

\bibitem[Bar87]{MR917147}
Tomek Bartoszy\'nski.
\newblock Combinatorial aspects of measure and category.
\newblock {\em Fund. Math.}, 127(3):225--239, 1987.

\bibitem[BJ95]{BJ}
Tomek Bartoszy\'nski and Haim Judah.
\newblock {\em Set theory}.
\newblock A K Peters, Ltd., Wellesley, MA, 1995.
\newblock On the structure of the real line.

\bibitem[Bre91]{MR1129144}
J\"org Brendle.
\newblock Larger cardinals in {C}icho\'n's diagram.
\newblock {\em J. Symbolic Logic}, 56(3):795--810, 1991.

\bibitem[FGKS17]{five}
Arthur Fischer, Martin Goldstern, Jakob Kellner, and Saharon Shelah.
\newblock Creature forcing and five cardinal characteristics of the continuum.
\newblock {\em Arch. Math. Logic}, 56:1045–1103, 2017.
\newblock
  \href{http://dx.doi.org/10.1007/s00153-017-0553-8}{doi:10.1007/s00153-017-0553-8}.

\bibitem[GKSnt]{1122}
Martin Goldstern, Jakob Kellner, and Saharon Shelah.
\newblock Cicho\'{n}'s maximum.
\newblock \url{https://arxiv.org/abs/1708.03691}, preprint.

\bibitem[GMS16]{MR3513558}
Martin Goldstern, Diego~Alejandro Mej{\'\i}a, and Saharon Shelah.
\newblock The left side of {C}icho\'n's diagram.
\newblock {\em Proc. Amer. Math. Soc.}, 144(9):4025--4042, 2016.

\bibitem[JS90]{MR1071305}
Haim Judah and Saharon Shelah.
\newblock The {K}unen-{M}iller chart ({L}ebesgue measure, the {B}aire property,
  {L}aver reals and preservation theorems for forcing).
\newblock {\em J. Symbolic Logic}, 55(3):909--927, 1990.

\bibitem[Kam89]{MR1022984}
Anastasis Kamburelis.
\newblock Iterations of {B}oolean algebras with measure.
\newblock {\em Arch. Math. Logic}, 29(1):21--28, 1989.

\bibitem[KT64]{MR0166107}
H.~J. Keisler and A.~Tarski.
\newblock From accessible to inaccessible cardinals. {R}esults holding for all
  accessible cardinal numbers and the problem of their extension to
  inaccessible ones.
\newblock {\em Fund. Math.}, 53:225--308, 1963/1964.

\bibitem[Man71]{MR0300887}
Richard Mansfield.
\newblock The theory of {B}oolean ultrapowers.
\newblock {\em Ann. Math. Logic}, 2(3):297--323, 1970/71.

\bibitem[Mej13]{MR3047455}
Diego~Alejandro Mej{\'\i}a.
\newblock Matrix iterations and {C}ichon's diagram.
\newblock {\em Arch. Math. Logic}, 52(3-4):261--278, 2013.

\bibitem[Mil82]{MR671224}
Arnold~W. Miller.
\newblock A characterization of the least cardinal for which the {B}aire
  category theorem fails.
\newblock {\em Proc. Amer. Math. Soc.}, 86(3):498--502, 1982.

\bibitem[MS16]{MR3451934}
M.~Malliaris and S.~Shelah.
\newblock Existence of optimal ultrafilters and the fundamental complexity of
  simple theories.
\newblock {\em Adv. Math.}, 290:614--681, 2016.

\bibitem[RS]{F1611}
D.~Raghavan and S.~Shelah.
\newblock Boolean ultrapowers and iterated forcing.
\newblock In preparation.

\end{thebibliography}
\end{document}